\newtheorem{theorem}{Theorem}[section] 
\newtheorem{fact}[theorem]{Fact}
\newtheorem{lemma}[theorem]{Lemma}
\newtheorem{proposition}[theorem]{Proposition}
\newtheorem{corollary}[theorem]{Corollary}
\newtheorem{definition}[theorem]{Definition}
\newtheorem{remark}[theorem]{Remark}
\newtheorem{example}[theorem]{Example}
\begin{document}

\author[M. Nasernejad   and   J. Toledo]{Mehrdad  Nasernejad$^{1,2,*}$ and   Jonathan Toledo$^{3}$}
\title[Criteria for  the presence of the maximal  ideal]{Criteria for the presence of the maximal  ideal in the set of  associated primes}
\subjclass[2010]{13B25, 13C15,  13F20, 13E05.} 
\keywords {Associated primes, The maximal ideal, Monomial ideals}

\thanks{$^*$Corresponding author}

\thanks{E-mail addresses:  m$\_$nasernejad@yahoo.com  and  jonathan.toledo@infotec.mx}  
\maketitle

\begin{center}
{\it
$^{1}$Univ. Artois, UR 2462, Laboratoire de Math\'{e}matique de  Lens (LML), \\  F-62300 Lens, France \\ and \\
$^{2}$Universit\'e  Caen Normandie, ENSICAEN, CNRS, Normandie Univ, GREYC UMR  6072, F-14000 Caen,  France\\
$^{3}$INFOTEC Centro de investigaci\'{o}n e innovaci\'{o}n en informaci\'{o}n \\
 y comunicaci\'{o}n, Ciudad de M\'{e}xico,14050, M\'{e}xico
}
\end{center}

\vspace{0.4cm}

\begin{abstract}
In this paper, we establish some criteria to detect the presence of the maximal ideal $(x_1, \ldots, x_n)$ in the set of associated primes of powers of monomial ideals in the polynomial ring $K[x_1, \ldots, x_n]$. Furthermore, for each of these criteria, we illustrate its applicability with corresponding examples and applications.
 \end{abstract}
\vspace{0.4cm}


\section{Introduction and Overview}

Let $I$ be an ideal in a  commutative  Noetherian ring $R$. Then a prime ideal $\mathfrak{p}\subset  R$ is called an {\it associated prime} of $I$ if there exists an element $f\in R$  such that $\mathfrak{p}=(I:_R f)$, where $(I:_R f)=\{r\in R \mid  rf\in I\}$. The  {\it set of associated primes} of $I$, denoted by  $\mathrm{Ass}_R(R/I)$, is the set of all prime ideals associated to  $I$. In 1979,  Brodmann \cite{BR} proved  that the sequence $\{\mathrm{Ass}_R(R/I^s)\}_{s \geq 1}$ of associated prime ideals is stationary  for large $s$. That is to say, there exists a positive integer $s_0$ such that $\mathrm{Ass}_R(R/I^s)=\mathrm{Ass}_R(R/I^{s_0})$ for all $s\geq s_0$. The  minimal such $s_0$ is called the {\it index of stability}   of  $I$ and $\mathrm{Ass}_R(R/I^{s_0})$ is called the {\it stable set } 
 of associated prime ideals of  $I$,  denoted by $\mathrm{Ass}^{\infty }(I).$ Furthermore, assume that  $I=\bigcap_{i=1}^t\mathfrak{q}_i$ is 
  a \textit{minimal primary decomposition} of $I$ with $\sqrt{\mathfrak{q}_i}=\mathfrak{p}_i$ for all $i$,  that is, $\mathfrak{q}_i$ is a $\mathfrak{p}_i$-primary ideal, 
  $\sqrt{\mathfrak{q}_i}\neq \sqrt{\mathfrak{q}_j}$ for any $i\neq j$, and $I\neq \bigcap_{i\neq j}\mathfrak{q}_i$ for all $j =1, \ldots, t$. Then the  set of
  associated primes of  $I$ is equal to $\{\mathfrak{p}_1, \ldots, \mathfrak{p}_t\}$, consult  \cite[Theorem 4.5]{Atiyah}. 
 
 Now, let  $I$ be  a monomial  ideal  in a polynomial ring $R=K[x_1,\ldots,x_n]$ over a field $K$, $\mathfrak{m}=(x_1, \ldots, x_n)$ is  the  maximal ideal of $R$, and $x_1,\ldots,x_n$ are indeterminates.  
 One of the central questions in this area concerns whether the maximal ideal belongs to the set of associated primes. In fact, the available results in the literature are rather limited. Nevertheless, recent investigations have employed combinatorial methods to address this issue for (square-free) monomial ideals.  
For instance, Chen et al.\ proved in \cite[Lemma 3.1]{CMS} that if $G$ is an odd cycle of length $2k+1$ and $I$ denotes its edge ideal, then  
 $\mathrm{Ass}(R/I^n) = \mathrm{Min}(R/I)\cup \{\mathfrak{m}\},$ whenever $n \geq k+1.$   
Along the same lines, it is established in \cite[Proposition 3.6]{NKA} that for an odd cycle $G$ and its cover ideal $J$, one has $\mathfrak{m}\in \mathrm{Ass}(J^s)$ for every $s\geq 2$. 
Moreover, Herzog et al.\ in \cite[Corollaries 4.5 and 5.5]{HRV} examined the presence of the maximal ideal in the contexts of transversal polymatroidal ideals and ideals of Veronese type. In a similar spirit, \cite[Theorem 2.10]{N3} deals with monomial ideals generated by maximal-length paths in an unrooted starlike tree, and demonstrates conditions under which the maximal ideal arises. A related contribution by H\`a and Morey \cite[Corollary 3.6]{HM} provides a lower bound for the minimal power $m$ for which $I^m$ admits embedded primes.  

Additionally, the maximal ideal is also known to occur in certain families of monomial ideals referred to as nearly normally torsion-free monomial ideals. Examples include dominating ideals (see \cite[Theorem 4.4]{NBR} and \cite[Theorem 3.9]{NQBM}) as well as $t$-spread monomial ideals (cf.\ \cite[Lemma 5.15]{NQKR}). Furthermore, \cite[Lemma 3.5]{NQT} discusses the appearance of the maximal ideal in edge and cover ideals of cones of graphs.  

  It is also a well-established fact that  $\mathfrak{m}\in \mathrm{Ass}(R/I)$ iff $\mathrm{depth}(R/I)=0,$ refer to  Exercise 2.2.11 in \cite{V1}. For this reason, many works have approached the problem through the perspective of depth. Further details in this direction can be found in \cite{BHH, HNTT, HH2, MST, RS}.
 
  The major goal of Section \ref{Section3} is to develop some  criteria for identifying when the maximal ideal  $\mathfrak{m}=(x_1, \ldots, x_n)$ appears in the set of associated primes of powers of monomial ideals in  $R = K[x_1, \ldots, x_n].$  
The key contributions of this section  are presented in Theorem  \ref{Th.maximal-ideal.1}, Proposition  \ref{Pro.maximal-ideal.1}, Theorem \ref{Th.Not.maximal.1}, 
Lemma \ref{Lem.Maximal.Ass.1}, and Proposition \ref{Pro.Maximal.Ass.2}. In addition, we illustrate some  applications of these results, consult  Proposition \ref{App.1}, Examples \ref{App.2},  \ref{Exam.Not.maximal.1}, and  \ref{Exam.Maximal.Ass.2}.

 In Section \ref{Section4}, our aim is to explore the appearance of the maximal ideal in powers of nearly normally torsion-free monomial ideals, see Theorem \ref{NNTF}. In particular, in Example \ref{APP.2} we show how Theorem \ref{NNTF}  can be used.

Throughout this text, we let $\mathcal{G}(I)$ denote the unique minimal set of monomial generators of a monomial ideal $I \subset R = K[x_1, \ldots, x_n]$, where $R$ is the polynomial ring over a field $K$. Moreover, for a monomial $u \in R$, its {\em support}, written $\mathrm{supp}(u)$, is defined as the set of variables that divide $u$;  we also set $\mathrm{supp}(1) = \emptyset$. For a monomial ideal $I$, we further define  
$\mathrm{supp}(I) = \bigcup_{u \in \mathcal{G}(I)} \mathrm{supp}(u).$


\section{Preliminaries}

In this section, we collect fundamental definitions, results, and facts that will play a key role in the subsequent section of this paper. We start by
 recalling the following proposition.

\begin{proposition}\label{Pro.supp} (\cite[Proposition 4.2]{NKRT})
Let  $I$ be   a monomial ideal in $R=K[x_1, \ldots, x_n]$ over a field $K$ with $\mathcal{G}(I)=\{u_1, \ldots, u_m\}$ and $\mathrm{Ass}_R(R/I)=\{\mathfrak{p}_1, \ldots, \mathfrak{p}_s\}$. Then, the following statements hold. 
\begin{itemize}
\item[(i)] If $x_i|u_t$ for some $i$ with  $1\leq i \leq n$, and  
for some $t$ with  $1\leq t \leq m$, then there exists $j$ with  $1\leq j \leq s,$ 
such that $x_i\in \mathfrak{p}_j$. 
\item[(ii)] If $x_i\in \mathfrak{p}_j$ for some $i$ with  $1\leq i \leq n$, and  for some $j$ with   $1\leq j \leq s$, then  there exists $t$ with  
 $1\leq t \leq m$, such that  $x_i|u_t$. 
\end{itemize} 
Especially, $\bigcup_{j=1}^s \mathrm{supp}(\mathfrak{p}_j)=\bigcup_{t=1}^m \mathrm{supp}(u_t)$.
\end{proposition}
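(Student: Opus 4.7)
The plan is to deduce both statements from the classical identification
\[
\{\text{zero-divisors of } R/I\} \;=\; \bigcup_{j=1}^{s}\mathfrak{p}_j,
\]
valid in any Noetherian ring. Thus a variable $x_i$ lies in some $\mathfrak{p}_j$ if and only if $x_i$ is a zero-divisor on $R/I$, equivalently $(I:_R x_i) \supsetneq I$.

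For part (i), I would suppose $x_i \mid u_t$ and set $v := u_t / x_i$. Because $u_t$ is a \emph{minimal} generator of $I$, the monomial $v$ does not lie in $I$; however $x_i v = u_t \in I$. Hence $v \in (I:_R x_i) \setminus I$, so $x_i$ is a zero-divisor on $R/I$ and must belong to some associated prime $\mathfrak{p}_j$.

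For part (ii), I would argue the contrapositive: if $x_i$ divides none of $u_1, \ldots, u_m$, then $x_i \notin \mathfrak{p}_j$ for every $j$. Because membership in a monomial ideal can be tested monomial-by-monomial, it suffices to show that whenever $w$ is a monomial with $x_i w \in I$ one already has $w \in I$. Indeed, $x_i w \in I$ forces some $u_t$ to divide $x_i w$; since $x_i \nmid u_t$, comparing exponents variable by variable yields $u_t \mid w$, so $w \in I$. Therefore $(I:_R x_i) = I$, meaning $x_i$ is a non-zero-divisor on $R/I$, and the identification above excludes $x_i$ from every $\mathfrak{p}_j$.

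The final ``Especially'' clause is then immediate: (i) gives $\bigcup_{t}\mathrm{supp}(u_t) \subseteq \bigcup_{j}\mathrm{supp}(\mathfrak{p}_j)$, and (ii) gives the reverse inclusion. I do not foresee any serious obstacle; the only step worth writing out carefully is the monomial-by-monomial reduction used to compute $(I:_R x_i)$ in (ii), as this is the point where the monomial-ideal hypothesis enters essentially.
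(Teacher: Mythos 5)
Your proof is correct and complete. The paper itself gives no argument for this statement---it is quoted verbatim from \cite[Proposition 4.2]{NKRT}---so there is nothing to diverge from, but your route is the standard one and every step checks out: in (i) the minimality of $u_t$ in $\mathcal{G}(I)$ guarantees $u_t/x_i\notin I$, so $x_i$ is a zero-divisor on $R/I$ and hence lies in some associated prime; in (ii) the contrapositive computation $(I:_R x_i)=I$ is valid because $(I:_R x_i)$ is again a monomial ideal and the divisibility comparison $u_t\mid x_iw$, $x_i\nmid u_t$ $\Rightarrow$ $u_t\mid w$ is exactly where the monomial hypothesis enters. A marginally more self-contained variant of (ii), avoiding the zero-divisor characterization, is to use that every associated prime of a monomial ideal has the form $\mathfrak{p}_j=(I:_R f)$ for a \emph{monomial} $f\notin I$: then $x_i\in\mathfrak{p}_j$ gives $x_if\in I$, so some $u_t$ divides $x_if$ but not $f$, forcing $x_i\mid u_t$. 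Either way the ``Especially'' clause follows as you say.
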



\begin{definition} (\cite[Definition 6.2.1]{MRS}) \label{Def. Corner elements}
\em{
Let $I$ be a monomial ideal in $R=K[x_1, \ldots, x_n]$. A monomial $f\in R$  is called an \textit{$I$-corner element} if $f\notin I$  and 
$x_1f, \ldots, x_nf \in I$.  The set of  $I$-corner elements of $I$  is denoted by $\mathrm{C}_R(I)$.
In particular,  $f$ is an $I$-corner element  of a monomial ideal $I \subset R=K[x_1, \ldots, x_n]$ if and only if $(I:f)=\mathfrak{m}\in \mathrm{Ass}(R/I)$, 
where $\mathfrak{m}=(x_1, \ldots, x_n)$.  
}
\end{definition}


\begin{fact}\label{fact2} (\cite[Exercise 2.1.62]{V1})
Let $R$  be a ring and $I$ an ideal. If $x \in R \setminus I$, 
 then there is an exact sequence of $R$-modules: 
 $$0\longrightarrow R/(I:x)\stackrel{\psi}{\longrightarrow} R/I \stackrel{\phi}{\longrightarrow} R/(I,x) \longrightarrow  0,$$
 where $\psi(\overline{r})= x\overline{r}$ is multiplication by $x$ and 
 $\phi(\overline{r}) =\overline{r}$.
\end{fact}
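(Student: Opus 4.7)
The statement is a standard piece of bookkeeping in the homological algebra of modules over a commutative ring, so my plan is simply to unpack the definitions of the two maps and then verify the three required properties: $\psi$ is a well-defined injective $R$-module homomorphism, $\phi$ is a well-defined surjective $R$-module homomorphism, and $\ker\phi = \mathrm{im}\,\psi$.

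First I would handle $\psi$. Well-definedness is precisely the defining feature of the ideal quotient $(I:x)$: if $\overline{r_1} = \overline{r_2}$ in $R/(I:x)$, then $r_1-r_2 \in (I:x)$, so $x(r_1-r_2)\in I$, and hence $\overline{xr_1}=\overline{xr_2}$ in $R/I$. The $R$-linearity of multiplication by $x$ is automatic. Injectivity is the converse of the same one-line calculation: if $\overline{xr}=0$ in $R/I$, then $xr\in I$, which by definition means $r\in (I:x)$, so $\overline{r}=0$ in $R/(I:x)$.

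Next I would handle $\phi$, which is simply the canonical projection associated with the inclusion $I \subseteq (I,x)$; well-definedness and surjectivity are immediate, and the kernel is $(I,x)/I$ by construction. Exactness at the middle term $R/I$ then reduces to the identification $\mathrm{im}\,\psi = \{\overline{xr} : r\in R\} = (xR+I)/I = (I,x)/I = \ker\phi$. I do not anticipate a serious obstacle anywhere; the hypothesis $x\in R\setminus I$ is not needed for exactness as such, but only ensures nondegeneracy of the sequence, namely that $(I:x)\neq R$ (so the left-hand term is nonzero) and $(I,x)\neq I$ (so the right-hand term is nonzero).
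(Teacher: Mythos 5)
Your verification is correct and is the standard argument: $\psi$ is well defined and injective precisely by the definition of $(I:x)$, and exactness in the middle is the identification $\operatorname{im}\psi=(xR+I)/I=(I,x)/I=\ker\phi$. The paper itself gives no proof, citing this as an exercise from Villarreal, so there is nothing to compare against; your side remark that $x\notin I$ only guarantees nondegeneracy of the end terms is also accurate.
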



\begin{fact}\label{fact3} (\cite[Exercise 9.42]{sharp}) 
Let 
$$0\longrightarrow L{\longrightarrow} M {\longrightarrow} N  \longrightarrow  0,$$

be a short exact sequence of modules and homomorphisms over the commutative Noetherian ring $R$. Then, 
$\mathrm{Ass}(L) \subseteq  \mathrm{Ass}(M) \subseteq  \mathrm{Ass}(L) \cup  \mathrm{Ass}(N).$ 
\end{fact}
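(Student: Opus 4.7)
The plan is to prove the two inclusions separately, using the characterization that $\mathfrak{p} \in \mathrm{Ass}(X)$ if and only if there is an injective $R$-module homomorphism $R/\mathfrak{p} \hookrightarrow X$. Identifying $L$ with its image in $M$ under the injection in the sequence, I treat $L$ as a submodule of $M$ and $N \cong M/L$.

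For the first inclusion $\mathrm{Ass}(L) \subseteq \mathrm{Ass}(M)$, I would take $\mathfrak{p} \in \mathrm{Ass}(L)$, pick an element $\ell \in L$ with $\mathfrak{p} = (0 :_R \ell)$, and observe that since $L \subseteq M$, the same element $\ell$, viewed in $M$, still satisfies $(0 :_R \ell) = \mathfrak{p}$. Hence $\mathfrak{p} \in \mathrm{Ass}(M)$. This step is routine and requires only the injectivity of $L \to M$.

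For the second inclusion $\mathrm{Ass}(M) \subseteq \mathrm{Ass}(L) \cup \mathrm{Ass}(N)$, I take $\mathfrak{p} \in \mathrm{Ass}(M)$, so there is an injection $\iota : R/\mathfrak{p} \hookrightarrow M$. Set $S = \iota(R/\mathfrak{p}) \cap L \subseteq M$ and split into two cases. If $S = 0$, then the composite $R/\mathfrak{p} \xrightarrow{\iota} M \twoheadrightarrow M/L \cong N$ remains injective (since its kernel pulls back to $S = 0$), which gives $\mathfrak{p} \in \mathrm{Ass}(N)$. If $S \neq 0$, then $S$ is a nonzero submodule of $\iota(R/\mathfrak{p}) \cong R/\mathfrak{p}$; using that $R/\mathfrak{p}$ is a domain, any nonzero element of $S$ has annihilator exactly $\mathfrak{p}$, producing an injection $R/\mathfrak{p} \hookrightarrow S \subseteq L$, and hence $\mathfrak{p} \in \mathrm{Ass}(L)$.

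The main conceptual point, and the only place where one must be slightly careful, is the case analysis on whether the copy of $R/\mathfrak{p}$ sitting inside $M$ meets $L$ nontrivially; the key fact used is that inside a domain $R/\mathfrak{p}$ every nonzero cyclic submodule is again isomorphic to $R/\mathfrak{p}$, which is what lets us descend the embedding into $L$. No Noetherian hypothesis is actually needed for this argument, though Noetherianness is convenient if one prefers the alternative route of choosing a maximal annihilator ideal inside $L \cap Rm$ for $m$ a witness of $\mathfrak{p} \in \mathrm{Ass}(M)$, which is the other standard line of attack one could take.
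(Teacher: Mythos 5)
Your proof is correct and complete: both inclusions are handled properly, and the key step --- the case analysis on whether the embedded copy of $R/\mathfrak{p}$ meets $L$, together with the observation that every nonzero element of the domain $R/\mathfrak{p}$ has annihilator exactly $\mathfrak{p}$ --- is exactly the standard argument. Note that the paper itself offers no proof of this statement; it is quoted as a known fact with a citation to Sharp's \emph{Steps in Commutative Algebra} (Exercise 9.42), so there is nothing to compare against beyond saying your argument is the expected one. Your remark that the Noetherian hypothesis is not needed for these two inclusions is also accurate.
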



\begin{theorem} (\cite[Theorem 3.3]{SNQ})   \label{SNQ.Th.3.3}
 Let $I_1\subset R_1=K[x_1, \ldots, x_n]$ and  $I_2\subset R_2=K[y_1, \ldots, y_m]$ be two monomial ideals in disjoint sets of variables. Let 
$$I=I_1R+I_2R\subset R=K[x_1, \ldots, x_n, y_1, \ldots, y_m].$$ 
Then $\mathfrak{p}\in \mathrm{Ass}(R/I)$ if and only if $\mathfrak{p}=\mathfrak{p}_1R + \mathfrak{p}_2R$, where 
$ \mathfrak{p}_1\in \mathrm{Ass}(R_1/I_1)$ and $ \mathfrak{p}_2\in \mathrm{Ass}(R_2/I_2)$.
\end{theorem}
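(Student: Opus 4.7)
The plan is to reduce the statement to the combinatorics of monomial ideals and to a single key computation of a colon ideal. First I would invoke two basic structural facts: every associated prime of a monomial ideal is itself a monomial prime (i.e.\ generated by a subset of the variables), and for any monomial ideal $J$ one has $\mathfrak{p}\in\mathrm{Ass}(R/J)$ if and only if $\mathfrak{p}=(J:u)$ for some monomial $u$. Because the variables of $R_1$ and $R_2$ are disjoint, any monomial prime $\mathfrak{p}\subseteq R$ decomposes uniquely as $\mathfrak{p}=\mathfrak{p}_1R+\mathfrak{p}_2R$, where $\mathfrak{p}_1$ is generated by the $x_i\in\mathfrak{p}$ and $\mathfrak{p}_2$ by the $y_j\in\mathfrak{p}$; likewise every monomial $u\in R$ factors uniquely as $u=u_1u_2$ with $u_1\in R_1$ and $u_2\in R_2$.

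The engine of the proof is the colon identity
\[
(I:u_1u_2)\;=\;(I_1:u_1)R\;+\;(I_2:u_2)R,
\]
valid for any monomials $u_1\in R_1$, $u_2\in R_2$. I would prove it by testing on an arbitrary monomial $v=v_1v_2$: one has $vu_1u_2\in I_1R+I_2R$ iff the monomial $(v_1u_1)(v_2u_2)$ is divisible by a generator of $I_1$ (forcing $v_1u_1\in I_1$, since that generator lies in $R_1$) or by a generator of $I_2$ (forcing $v_2u_2\in I_2$). This is equivalent to $v_1\in(I_1:u_1)$ or $v_2\in(I_2:u_2)$, which is precisely membership of $v$ in the monomial ideal $(I_1:u_1)R+(I_2:u_2)R$.

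With this identity in hand, the theorem almost proves itself. For the direction $(\Leftarrow)$, choose monomial witnesses $u_i$ with $(I_i:u_i)=\mathfrak{p}_i$; the identity then gives $(I:u_1u_2)=\mathfrak{p}_1R+\mathfrak{p}_2R$, which is prime as a sum of monomial primes in disjoint variable sets, so $\mathfrak{p}_1R+\mathfrak{p}_2R\in\mathrm{Ass}(R/I)$. For the direction $(\Rightarrow)$, start from $\mathfrak{p}=(I:u)$ and use the structural facts to write $\mathfrak{p}=\mathfrak{p}_1R+\mathfrak{p}_2R$ and $u=u_1u_2$. The key identity then reads $\mathfrak{p}_1R+\mathfrak{p}_2R=(I_1:u_1)R+(I_2:u_2)R$; comparing the minimal monomial generators lying in the $x$-variables with those lying in the $y$-variables forces $(I_i:u_i)=\mathfrak{p}_i$, and since $\mathfrak{p}_i$ is prime we conclude $\mathfrak{p}_i\in\mathrm{Ass}(R_i/I_i)$.

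The main obstacle I expect lies in the colon computation itself: the step that a monomial lies in $I_1R+I_2R$ iff its $x$-part lies in $I_1$ or its $y$-part lies in $I_2$ genuinely requires the disjointness of the variable sets, combined with the principle that a monomial belongs to a sum of monomial ideals precisely when it is divisible by a single generator of one summand. A secondary subtlety is the edge case where one of $\mathfrak{p}_1,\mathfrak{p}_2$ is the zero ideal (possible only when the corresponding $I_i$ is zero); this should either be ruled out by a mild nondegeneracy assumption on $I_1,I_2$ or treated as a separate easy case in both directions.
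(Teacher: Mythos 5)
This statement is quoted in the paper as an external result (\cite[Theorem 3.3]{SNQ}) and is not proved there, so there is no internal argument to compare against. Your proof is correct and is essentially the standard argument: the colon identity $(I:u_1u_2)=(I_1:u_1)R+(I_2:u_2)R$ is valid (a monomial of $R$ lies in $I_1R+I_2R$ exactly when its $R_1$-part lies in $I_1$ or its $R_2$-part lies in $I_2$, by divisibility by a single generator), the monomial-witness characterization of associated primes of monomial ideals justifies both directions, and the degenerate case $\mathfrak{p}_i=(0)$ matches $I_i=(0)$ on both sides, so your flagged edge case resolves consistently rather than needing an extra hypothesis.
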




\begin{proposition}\label{PRO.WHEEL.1} (\cite[Proposition 3.10]{NKA})
Suppose that $W_{2n}$ is  a wheel graph of order $2n$ on the  vertex set $[2n]$, $R=K[x_1, \ldots, x_{2n}]$ is a  polynomial ring over a field $K$,
 and $\mathfrak{m}$ is the unique homogeneous maximal ideal  of $R$. Then  $\mathfrak{m}\in \mathrm{Ass}_R(R/(J(W_{2n}))^s)$  for all $s\geq 3$, and $\mathfrak{m}\notin \mathrm{Ass}_R(R/(J(W_{2n}))^s)$ for $s=1,2$.
 \end{proposition}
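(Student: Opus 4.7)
The plan is to detect $\mathfrak{m} \in \mathrm{Ass}_R(R/(J(W_{2n}))^s)$ by exhibiting (for $s \geq 3$) or ruling out (for $s = 1, 2$) a $(J(W_{2n}))^s$-corner element in the sense of Definition~\ref{Def. Corner elements}. First I would record the generating structure of $J := J(W_{2n})$. Labeling the rim of $W_{2n}$ as $1, \ldots, 2n-1$ (the odd cycle $C_{2n-1}$) and the hub as $2n$, the minimal vertex covers of $W_{2n}$ come in exactly two families: covers $\{2n\} \cup C$ with $C$ a minimal vertex cover of $C_{2n-1}$ (of size $n$), and the single hub-free cover $\{1, \ldots, 2n-1\}$. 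Writing $u_C := \prod_{i \in C} x_i$, this yields the minimal generators $x_{2n} u_C$ (of degree $n+1$) together with $x_1 \cdots x_{2n-1}$ (of degree $2n-1$).

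For $s = 1$, $J$ is squarefree, so $\mathrm{Ass}(R/J) = \mathrm{Min}(R/J)$ consists of the edge primes of $W_{2n}$, each of height $2$, and $\mathfrak{m} \notin \mathrm{Ass}(R/J)$. For $s = 2$ I would proceed by contradiction: suppose a $J^2$-corner element $z$ exists. A coordinatewise application of Corollary~\ref{SNQ.Cor.4.6}---using that each $(J \setminus x_i)^2$ is either a principal monomial power (when $i = 2n$) or a product of a monomial and a height-$\leq 2$ ideal (when $i$ is a rim vertex), hence has no associated prime containing $\mathfrak{m} \setminus x_i$---forces $x_1 x_2 \cdots x_{2n} \mid z$, so $\deg z \geq 2n$. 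Combined with $z \notin J^2$ (minimum generator degree $2(n+1)$) this leaves $\deg z \in \{2n, 2n+1\}$. The option $z = x_1 \cdots x_{2n}$ fails because $\deg(x_{2n}z) = 2n+1 < 2n+2$; in the option ``$z$ with one squared exponent'', $x_i z$ has degree exactly $2n+2$ and so must equal a minimum-degree generator of $J^2$, which for $n \geq 3$ must have the form $x_{2n}^2 u_C u_{C'}$. Tracking the $x_{2n}$-exponent as $i$ varies then gives $v_{2n}(z) + [i = 2n] = 2$ for every $i$, impossible; for $n = 2$ the three types of degree-$6$ generators of $J^2$ coincide in degree, but a short direct case check on the four candidates $x_j^2 \prod_{k \neq j} x_k$ ($j = 1, \ldots, 4$) rules them out.

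For $s \geq 3$, the plan is to exhibit the explicit corner element
\[
z_s := (x_1 x_2 \cdots x_{2n-1})^{s-1}\, x_{2n}^{2}.
\]
Non-membership $z_s \notin J^s$ follows from a pigeonhole count: if $g_1 \cdots g_s$ divides $z_s$ with $a$ hub-type generators $x_{2n} u_{C_k}$ and $b = s-a$ copies of $x_1 \cdots x_{2n-1}$, then the $x_{2n}$-exponent of $z_s$ forces $a \leq 2$; writing $n_j := |\{k : j \in C_k\}|$, matching rim exponents gives $n_j \leq a-1$, so summing over $j \in \{1, \ldots, 2n-1\}$ and using $\sum_j n_j = an$ yields $a(n-1) \geq 2n-1$, incompatible with $a \leq 2$.

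To establish $x_i z_s \in J^s$ for every $i$, I would produce explicit factorizations. For a rim vertex $i$, the target factorization is $x_i z_s = x_{2n}^2\, u_{C_1} u_{C_2}\,(x_1 \cdots x_{2n-1})^{s-2}$, which requires two minimal covers $C_1, C_2$ of $C_{2n-1}$ with $C_1 \cap C_2 = \{i\}$ and $C_1 \cup C_2 = \{1, \ldots, 2n-1\}$; by the rotational symmetry of $C_{2n-1}$ it suffices to treat $i = 1$, where the two color classes $\{2, 4, \ldots, 2n-2\}$ and $\{3, 5, \ldots, 2n-1\}$ of the induced path on $\{2, \ldots, 2n-1\}$ serve as the complements of $C_1, C_2$ in $C_{2n-1}$. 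For $i = 2n$, the target becomes
\[
x_{2n}^3\, u_{C_1} u_{C_2} u_{C_3}\,(x_1 \cdots x_{2n-1})^{s-3} \ \Bigm|\ x_{2n}\, z_s,
\]
which requires three minimal covers of $C_{2n-1}$ with empty intersection (equivalently, three maximum independent sets covering all rim vertices). Here the hypothesis $s \geq 3$ is genuinely used, to allow the exponent $s-3 \geq 0$ on $x_1 \cdots x_{2n-1}$; such a triple is produced by adjoining the set $\{1, 3, 5, \ldots, 2n-3\}$ (independent in $C_{2n-1}$ by inspection) to the pair above. The main technical obstacle throughout is assembling these min-cover families of the odd cycle so that the exponent and degree bookkeeping in $J^s$ closes up on the nose; once the bipartition and the $3$-coloring constructions are in hand, the degree counts follow mechanically.
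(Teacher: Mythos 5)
The paper itself offers no proof of this proposition (it is quoted from \cite[Proposition 3.10]{NKA}), so your argument can only be judged on its own terms. The genuine gap is in your case $s=2$, at the step ``Combined with $z \notin J^2$ (minimum generator degree $2(n+1)$) this leaves $\deg z \in \{2n, 2n+1\}$.'' Non-membership in $J^2$ imposes no upper bound on the degree of a monomial: for example $x_1^3x_2\cdots x_{2n}$ is divisible by $x_1\cdots x_{2n}$, has degree $2n+2$, and lies outside $J^2$ (its $x_{2n}$-exponent is $1$, so a dividing generator would have to be $x_{2n}u_C\cdot x_1\cdots x_{2n-1}$ with $u_C\mid x_1^2$, or $(x_1\cdots x_{2n-1})^2$, both impossible). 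A corner element could a priori have any degree, and your subsequent case analysis only eliminates candidates of degree $2n$ and $2n+1$, so no contradiction is reached. The repair is easy and actually shortens the argument: every minimal generator of $J^2$ is a product of two squarefree monomials and hence has all exponents $\leq 2$; if $z=\prod x_i^{a_i}$ is a $J^2$-corner element, then for each $i$ some generator $g$ satisfies $g\mid x_iz$ but $g\nmid z$, which forces the $x_i$-exponent of $g$ to equal exactly $a_i+1$, whence $a_i\leq 1$ for every $i$. Together with your divisibility $x_1\cdots x_{2n}\mid z$ this pins down $z=x_1\cdots x_{2n}$, whose degree $2n$ is below the minimal generator degree $2n+2$ of $J^2$, so $x_iz\notin J^2$ and no corner element exists.

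The rest is essentially sound. The $s=1$ case is immediate (and also follows from Proposition \ref{App.1} of this paper). For $s\geq 3$, the corner element $z_s=(x_1\cdots x_{2n-1})^{s-1}x_{2n}^2$ works: your two covers with $C_1\cap C_2=\{i\}$ and $C_1\cup C_2=[2n-1]$, and your three covers with empty intersection, exist exactly as constructed, and the pigeonhole count for $z_s\notin J^s$ closes up. Two smaller inaccuracies to fix: (a) not every minimal vertex cover of $C_{2n-1}$ has size $n$ once $n\geq 5$ (e.g.\ $\{2,3,5,6,8,9\}$ is a minimal cover of $C_9$ of size $6$), so $\mathcal{G}(J)$ contains hub-type generators of degree $>n+1$; this is harmless where you use it, since the pigeonhole only needs $|C_k|\geq n$ and the $s=2$ analysis only invokes minimum-degree generators, but the description of $\mathcal{G}(J)$ should be corrected. (b) Your justification that $\mathfrak{m}\setminus x_i\notin\mathrm{Ass}((J\setminus x_i)^2)$ for a rim vertex $i$ (``a product of a monomial and a height-$\leq 2$ ideal'') is not accurate; the right reason is that $J\setminus x_i=x_{2n}M_i$ with $x_{2n}\notin\mathrm{supp}(M_i)$, so by Proposition \ref{Pro.supp} no associated prime of $(J\setminus x_i)^2$ contains both $x_{2n}$ and a rim variable, while $\mathfrak{m}\setminus x_i$ contains both.
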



\begin{lemma}(\cite[Lemma 5.15]{NQKR}) \label{Lemma 5.15}
Let $u=x_ax_bx_n$ be a $t$-spread monomial and $I=B_t(u)\subset R=K[x_i:  x_i \in \mathrm{supp}(I)]$. Then we have the following:
\begin{enumerate}
\item[\em{(i)}] if $b < 2t+1$, then $I$ is normally torsion-free.
\item[\em{(ii)}] $a=1$ and $b\geq2t+1$, then $I$ is nearly normally torsion-free.
\item[\em{(iii)}] $a>1$ and $b\geq2t+1$, then $I$ is not nearly normally torsion-free.
\end{enumerate}
\end{lemma}


\begin{theorem}(\cite[Theorem 4.5]{NQKR}) \label{Theorem 4.5}
Assume that $G = (V(G), E(G))$ is a finite simple connected graph, and $J(G)$ denotes the cover ideal of $G$. 
Then $J(G)$ is nearly normally torsion-free if and only if $G$ is either a bipartite graph or an almost bipartite graph.
\end{theorem}


\begin{lemma}(\cite[Lemma 3.2]{NQKR}) \label{Lemma 3.2}
Let $I$ be a monomial ideal in a polynomial ring $R = K[x_1,\dots,x_n]$ over a field $K$ such that
 $\operatorname{Ass}_R(R/I)=\operatorname{Min}(I).$ Let $I(\mathfrak{m}\setminus \{x_i\})$ be normally torsion-free for all
$i=1,\dots,n$, where $\mathfrak{m}=(x_1,\dots,x_n).$ Then $I$ is nearly normally torsion-free.
\end{lemma}


\begin{theorem} (\cite[Theorem 2.5]{SN})  \label{NTF.Th.2.5}
Let $I$ be  a  monomial ideal  in  $R=K[x_1, \ldots, x_n]$ such that 
$I=I_1R + I_2R$, where
 $\mathcal{G}(I_1) \subset R_1=K[x_1, \ldots, x_m]$ and $\mathcal{G}(I_2) \subset R_2=K[x_{m+1}, \ldots, x_n]$
  for some  positive integer $m$. If $I_1$ and   $I_2$  are normally torsion-free, then  $I$  is so.
 \end{theorem}



\section{Criteria for the presence of the maximal  ideal in the set of  associated primes}\label{Section3}

In what follows, we provide some  criteria that determine whether the maximal ideal  $\mathfrak{m}=(x_1, \ldots, x_n)$  appears in the set of associated primes of powers of monomial ideals in $R=K[x_1, \ldots, x_n]$.  Our discussion begins with the following theorem.

\begin{theorem}    \label{Th.maximal-ideal.1}
Let $I\subset R=K[x_1, \ldots, x_n]$ be a monomial ideal, $\mathfrak{p}$ a monomial prime  ideal in $R$, $t\geq 1$, and   $y_1, \ldots, y_s$ be distinct variables in $R$ such that, for each $i=1, \ldots, s$, there exist  some  $\alpha_i \geq 1$ and a   monomial  ideal $J_i$ in $R$  with $y_i\notin \mathrm{supp}(J_i)$ such that
 $(I^t,y_i^{\alpha_i})=(J_i, y_i^{\alpha_i})$ and $\mathfrak{p}\setminus y_i \notin \mathrm{Ass}(R/J_i)$. 
 Then 
\begin{itemize}
\item[(i)] $\mathfrak{p}\in \mathrm{Ass}(R/I^t)$ if and only if $\mathfrak{p}\in \mathrm{Ass}(R/(I^t: \prod_{i=1}^sy_i^{\alpha_i}))$. 
\item[(ii)] If $\prod_{i=1}^{s}y_i^{\alpha_i} \in I^\ell$ for some $\ell \geq t$, then  $\mathfrak{p}\notin \mathrm{Ass}(R/I^t)$.
\end{itemize}
\end{theorem}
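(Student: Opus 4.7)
The plan is to prove part (i) in full; part (ii) then drops out immediately, because if $\prod_{i}y_i^{\alpha_i}\in I^\ell$ for some $\ell\geq t$, then $\prod_{i}y_i^{\alpha_i}\in I^t$ (as $I^\ell\subseteq I^t$), so $I^t:\prod_{i}y_i^{\alpha_i}=R$ has empty $\mathrm{Ass}$, and part (i) forces $\mathfrak{p}\notin\mathrm{Ass}(R/I^t)$. The ``if'' direction of (i) is routine: any witness $\mathfrak{p}=((I^t:\prod y_i^{\alpha_i}):g)$ rewrites as $(I^t:g\prod y_i^{\alpha_i})$, which places $\mathfrak{p}$ in $\mathrm{Ass}(R/I^t)$. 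The ``only if'' direction is the substantive one, which I would prove by induction on $s$.

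For the base case $s=1$, I first observe that the degenerate possibility $y_1^{\alpha_1}\in I^t$ is incompatible with $\mathfrak{p}\in\mathrm{Ass}(R/I^t)$: then $I^t=(J_1,y_1^{\alpha_1})$, so by Theorem~\ref{SNQ.Th.3.3} every associated prime has the form $\mathfrak{q}+(y_1)$ with $\mathfrak{q}\in\mathrm{Ass}(R/J_1)$, and the hypothesis $\mathfrak{p}\setminus y_1\notin\mathrm{Ass}(R/J_1)$ rules out $\mathfrak{p}$. Otherwise, Fact~\ref{fact2} with $x=y_1^{\alpha_1}$ gives the exact sequence
\begin{equation*}
0 \longrightarrow R/(I^t:y_1^{\alpha_1}) \longrightarrow R/I^t \longrightarrow R/(I^t,y_1^{\alpha_1}) \longrightarrow 0,
\end{equation*}
whence Fact~\ref{fact3} yields $\mathfrak{p}\in\mathrm{Ass}(R/(I^t:y_1^{\alpha_1}))\cup\mathrm{Ass}(R/(I^t,y_1^{\alpha_1}))$. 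Rewriting $(I^t,y_1^{\alpha_1})=(J_1,y_1^{\alpha_1})$ and applying Theorem~\ref{SNQ.Th.3.3} as above excludes $\mathfrak{p}$ from the second set, so $\mathfrak{p}\in\mathrm{Ass}(R/(I^t:y_1^{\alpha_1}))$.

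For the inductive step $s\geq 2$, I would apply the induction hypothesis to the first $s-1$ variables (the hypothesis for them is a sub-collection of the given one) to reduce to $\mathfrak{p}\in\mathrm{Ass}(R/L)$, with $L:=I^t:\prod_{i<s}y_i^{\alpha_i}$, and then rerun the base-case argument with $L$ and $y_s$ replacing $I^t$ and $y_1$; the base-case proof uses only that $I^t$ is a monomial ideal, so it applies verbatim once a monomial ideal $J$ with $y_s\notin\mathrm{supp}(J)$, $(L,y_s^{\alpha_s})=(J,y_s^{\alpha_s})$, and $\mathfrak{p}\setminus y_s\notin\mathrm{Ass}(R/J)$ is exhibited. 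The natural candidate is $J:=J_s:\prod_{i<s}y_i^{\alpha_i}$. The first two properties fall out by applying $(\,\cdot\,):\prod_{i<s}y_i^{\alpha_i}$ to $(I^t,y_s^{\alpha_s})=(J_s,y_s^{\alpha_s})$ and invoking that for monomial ideals colon distributes over sums, together with $(y_s^{\alpha_s}):y_i^{\alpha_i}=(y_s^{\alpha_s})$ for $i\neq s$.

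The main obstacle is the third property, $\mathfrak{p}\setminus y_s\notin\mathrm{Ass}(R/J)$: checking that the hypothesis on $\mathfrak{p}\setminus y_s$ survives taking the partial colon of $J_s$. For this I would invoke the standard primary-decomposition fact $\mathrm{Ass}(R/(J_s:m))\subseteq\mathrm{Ass}(R/J_s)$ for any $m\in R$, and combine it with the original hypothesis $\mathfrak{p}\setminus y_s\notin\mathrm{Ass}(R/J_s)$ to conclude $\mathfrak{p}\setminus y_s\notin\mathrm{Ass}(R/J)$. Cleanly establishing this persistence of the hypothesis under iteration is the crux of the argument.
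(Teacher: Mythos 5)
Your proof is correct and follows essentially the same route as the paper's: induction on $s$, the short exact sequence $0\to R/(A:y^{\alpha})\to R/A\to R/(A,y^{\alpha})\to 0$ combined with Theorem~\ref{SNQ.Th.3.3} to exclude $\mathfrak{p}$ from $\mathrm{Ass}(R/(A,y^{\alpha}))$, and the containment $\mathrm{Ass}(R/(J_s:M))\subseteq\mathrm{Ass}(R/J_s)$ to carry the hypothesis through the colon. The only differences are cosmetic: you obtain the key identity $((I^t:M),y_s^{\alpha_s})=((J_s:M),y_s^{\alpha_s})$ by distributing the colon over the sum rather than by the paper's element-chasing, and you explicitly dispose of the degenerate case $y^{\alpha}\in I^t$ required by Fact~\ref{fact2}, which the paper leaves implicit.
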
 

\begin{proof}
(i) ($\Leftarrow$) Assume that  $\mathfrak{p}\in \mathrm{Ass}(R/(I^t: \prod_{i=1}^sy_i^{\alpha_i}))$. On account of  Fact \ref{fact2}, we have the 
 following short exact sequence 
$$0\longrightarrow R/(I^t: \prod_{i=1}^sy_i^{\alpha_i})\stackrel{\psi}{\longrightarrow} R/I^t \stackrel{\phi}{\longrightarrow} R/(I^t, \prod_{i=1}^sy_i^{\alpha_i}) \longrightarrow 0,$$
where $\psi(\overline{r})=\overline{r} \prod_{i=1}^sy_i^{\alpha_i}$ and $\phi(\overline{r})=\overline{r}$.  We can deduce from   Fact \ref{fact3} that 
$\mathrm{Ass}(R/(I^t: \prod_{i=1}^sy_i^{\alpha_i})) \subseteq \mathrm{Ass}(R/I^t)$. 
As  $\mathfrak{p}\in \mathrm{Ass}(R/(I^t: \prod_{i=1}^sy_i^{\alpha_i}))$, we get $\mathfrak{p}\in \mathrm{Ass}(R/I^t)$. 

($\Rightarrow$) Let $\mathfrak{p}\in \mathrm{Ass}(R/I^t)$. To establish  $\mathfrak{p}\in \mathrm{Ass}(R/(I^t: \prod_{i=1}^sy_i^{\alpha_i}))$, we use induction on $s$.  Since $y_1\notin \mathrm{supp}(J_1)$, we can derive from   Theorem \ref{SNQ.Th.3.3}  that  $\mathfrak{p}\in \mathrm{Ass}(R/(J_1, y_1^{\alpha_1}))$ if and only if $\mathfrak{p}=(\mathfrak{p}_1, y_1)$, where $\mathfrak{p}_1=\mathfrak{p}\setminus y_1 \in \mathrm{Ass}(R/J_1)$. 
Because  $(I^t,y_1^{\alpha_1})=(J_1,y_1^{\alpha_1})$ and   $\mathfrak{p}\setminus y_1 \notin \mathrm{Ass}(R/J_1)$, we obtain 
 $\mathfrak{p}\notin \mathrm{Ass}(R/(J_1, y_1^{\alpha_1}))$, and so $\mathfrak{p}\notin \mathrm{Ass}(R/(I^t, y_1^{\alpha_1}))$. 
By    Fact \ref{fact3}, we have 
$$\mathrm{Ass}(R/I^t) \subseteq \mathrm{Ass}(R/(I^t:y_1^{\alpha_1})) \cup \mathrm{Ass}(R/(I^t, y_1^{\alpha_1})).$$
This gives that  $\mathfrak{p}\in \mathrm{Ass}(R/(I^t:y_1^{\alpha_1}))$, and consequently, the claim is true for the case in which $s=1$. Now, assume that the assertion has been shown for  all values  less than $s>1$, and that $\mathfrak{p}\in \mathrm{Ass}(R/I^t)$.  To simplify the notation, put  $M:=\prod_{i=1}^{s-1}y_i^{\alpha_i}$.  From the induction hypothesis, we get  $\mathfrak{p}\in \mathrm{Ass}(R/(I^t:M))$. In addition,  according to  Fact \ref{fact3}, 
we have 
\begin{equation} \label{13}
\mathrm{Ass}(R/(I^t:M))\subseteq \mathrm{Ass}(R/((I^t:M):y_s^{\alpha_s}) )\cup \mathrm{Ass}(R/((I^t:M),y_s^{\alpha_s})).
\end{equation} 
Here, we  prove   $((I^t:M),y_s^{\alpha_s})=((J_s : M),y_s^{\alpha_s}).$  Due to  $y_s\notin \mathrm{supp}(J_s)$ and  
$(I^t,y_s^{\alpha_s})=(J_s, y_s^{\alpha_s})$,  one can easily  see that  $J_s  \subseteq I^t$, and therefore  
$((J_s :M),y_s^{\alpha_s}) \subseteq ((I^t:M),y_s^{\alpha_s}).$ To establish the reverse inclusion, take  a monomial $u$ in 
$((I^t:M),y_s^{\alpha_s})$. If $y_s^{\alpha_s}\mid u$, then $u\in ((J_s :M),y_s^{\alpha_s}),$ and the argument  is over.
 Let  $y_s^{\alpha_s}\nmid u$. Then we gain 
$u\in (I^t:M),$ and hence $uM\in I^t$. This implies  that there exists a monomial $f\in \mathcal{G}(I^t)$ such that $f\mid uM$. 
Since $f\in \mathcal{G}(I^t)$ and $(I^t,y_s^{\alpha_s})=(J_s, y_s^{\alpha_s})$, we must have $f\in J_s$ or $y_s^{\alpha_s} \mid f$. If 
$y_s^{\alpha_s} \mid f$, then $y_s^{\alpha_s} \mid uM$.  As $y_s \nmid M$, this gives that $y_s^{\alpha_s} \mid u$, which contradicts the fact that 
 $y_s^{\alpha_s}\nmid u$.  
We thus obtain  $f\in J_s$, and hence $u\in (J_s :M)$. Therefore, we can deduce that $((I^t:M),y_s^{\alpha_s}) \subseteq ((J_s :M),y_s^{\alpha_s}).$ 
Moreover, the assumption gives  that $\mathfrak{p}\setminus y_s \notin \mathrm{Ass}(R/J_s)$.  Because  $\mathrm{Ass}(R/(J_s:M)) \subseteq \mathrm{Ass}(R/J_s)$, we get  $\mathfrak{p}\setminus y_s \notin \mathrm{Ass}(R/(J_s:M))$. On account of  Theorem \ref{SNQ.Th.3.3}, we have 
$\mathfrak{p}\in \mathrm{Ass}(R/((J_s:M), y_s^{\alpha_s}))$ if and only if $\mathfrak{p}=(\mathfrak{p}_1, y_s)$, 
where $\mathfrak{p}_1 = \mathfrak{p}\setminus  y_s\in \mathrm{Ass}(R/(J_s:M))$. Consequently,   we must have 
$\mathfrak{p}\notin \mathrm{Ass}(R/((J_s:M), y_s^{\alpha_s}))$, and so $\mathfrak{p}\notin\mathrm{Ass}(R/((I^t:M), y_s^{\alpha_s})).$ 
It follows from $(\ref{13})$ that  $\mathfrak{p}\in \mathrm{Ass}(R/((I^t:M): y_s^{\alpha_s}))=\mathrm{Ass}(R/(I^t:\prod_{i=1}^sy_i^{\alpha_i}))$. 
This completes the inductive step, and thus the claim has been proved by induction. 

(ii) Suppose, on the contrary, that  $\mathfrak{p}\in \mathrm{Ass}(R/I^t)$.  It follows from  statement (i) that  
 $\mathfrak{p}\in \mathrm{Ass}(R/(I^t: \prod_{i=1}^{s}y_i^{\alpha_i})).$ Thanks to $\ell \geq t$ and $\prod_{i=1}^{s}y_i^{\alpha_i}\in I^\ell$, 
 we  obtain $(I^t: \prod_{i=1}^{s}y_i^{\alpha_i})=R$, which contradicts the fact that
  $\mathfrak{p}\in \mathrm{Ass}(R/(I^t: \prod_{i=1}^{s}y_i^{\alpha_i})).$ Accordingly, we conclude that   
  $\mathfrak{p}\notin \mathrm{Ass}(R/I^t)$.  
\end{proof}


As an immediate consequence of  Theorem  \ref{Th.maximal-ideal.1}, we obtain   the following corollary, which provides for us the first criterion in this paper.

\begin{corollary}    \label{Cor.maximal-ideal.1}
Let $I\subset R=K[x_1, \ldots, x_n]$ be a monomial ideal, $\mathfrak{m}=(x_1, \ldots, x_n)$, and   $y_1, \ldots, y_s$ be distinct variables in $R$ such that, for each $i=1, \ldots, s$, there exist  some  $\alpha_i \geq 1$ and a   monomial  ideal $J_i$ in $R$  with $y_i\notin \mathrm{supp}(J_i)$ such that
 $(I,y_i^{\alpha_i})=(J_i, y_i^{\alpha_i})$ and $\mathfrak{m}\setminus y_i \notin \mathrm{Ass}(R/J_i)$. If  $\prod_{i=1}^{s}y_i^{\alpha_i} \in I$, 
 then $\mathfrak{m}\notin  \mathrm{Ass}(R/I)$. 
\end{corollary}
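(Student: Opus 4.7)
The plan is essentially to recognize that the corollary is the $t=1$, $\mathfrak{p}=\mathfrak{m}$ specialization of Theorem \ref{Th.maximal-ideal.1}(ii), so nothing new needs to be proved beyond checking that every hypothesis matches.

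First I would set $t=1$ and $\mathfrak{p} = \mathfrak{m} = (x_1,\ldots,x_n)$ in Theorem \ref{Th.maximal-ideal.1}. Note that $\mathfrak{m}$ is a monomial prime ideal of $R$, so the requirement on $\mathfrak{p}$ is satisfied. The hypotheses on the variables $y_1,\ldots,y_s$, the exponents $\alpha_i$, and the monomial ideals $J_i$ in the corollary are then word-for-word the hypotheses required by the theorem, noting that $(I^t, y_i^{\alpha_i}) = (I, y_i^{\alpha_i})$ when $t=1$.

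Next, I would apply part (ii) of the theorem with $\ell = 1$. The theorem requires $\ell \geq t$, which holds as $\ell = t = 1$, and requires $\prod_{i=1}^{s} y_i^{\alpha_i} \in I^\ell = I$, which is exactly the assumption of the corollary. The conclusion $\mathfrak{p} \notin \mathrm{Ass}(R/I^t)$ then reads $\mathfrak{m} \notin \mathrm{Ass}(R/I)$, which is what we want.

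There is no real obstacle here; the only thing to verify carefully is that translating the statement (replacing $I^t$ with $I$, $\mathfrak{p}$ with $\mathfrak{m}$, and taking $\ell = t$) matches the hypotheses of Theorem \ref{Th.maximal-ideal.1} exactly. The proof is therefore a one-line invocation of that theorem.
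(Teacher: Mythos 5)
Your proposal is correct and matches the paper exactly: the paper states the corollary as an immediate consequence of Theorem \ref{Th.maximal-ideal.1}, obtained by the same specialization $t=1$, $\mathfrak{p}=\mathfrak{m}$, $\ell=1$ in part (ii). Nothing further is needed.
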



The following proposition  highlights an application of   Corollary \ref{Cor.maximal-ideal.1}, which also provides an alternative proof of Corollary 2.9 in  \cite{NR}. 

\begin{proposition} \label{App.1}
Let $I\subset R=K[x_1, \ldots, x_n]$ be a square-free monomial ideal. Then $\mathfrak{m} \in \mathrm{Ass}(R/I)$ if and only if $I=\mathfrak{m}$.
\end{proposition}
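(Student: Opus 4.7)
The plan is to prove both directions, with the reverse direction $(\Leftarrow)$ being immediate since $I = \mathfrak{m}$ gives $R/I \cong K$ and $\mathrm{Ass}(R/I) = \{\mathfrak{m}\}$. For $(\Rightarrow)$ I would argue by contrapositive using Corollary \ref{Cor.maximal-ideal.1}, combined with induction on $n$. The base case $n=1$ is trivial because a squarefree monomial ideal in $K[x_1]$ is either $(0)$ or $(x_1) = \mathfrak{m}$, and the claim can be read off directly.

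For the inductive step, assume $I \subset R = K[x_1,\ldots,x_n]$ is squarefree with $I \neq \mathfrak{m}$. If $\mathrm{supp}(I) \subsetneq \{x_1,\ldots,x_n\}$, then by Proposition \ref{Pro.supp} the missing variable lies in no associated prime of $R/I$, so $\mathfrak{m} \notin \mathrm{Ass}(R/I)$ and we are done. Otherwise $\mathrm{supp}(I) = \{x_1,\ldots,x_n\}$, in which case I would pick any minimal generator $u = x_{i_1}\cdots x_{i_k}$ of $I$ and apply Corollary \ref{Cor.maximal-ideal.1} with $y_j = x_{i_j}$, $\alpha_j = 1$, and $J_j$ the ideal generated by those elements of $\mathcal{G}(I)$ which do not involve $x_{i_j}$. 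The equality $(I,x_{i_j}) = (J_j, x_{i_j})$, the condition $x_{i_j} \notin \mathrm{supp}(J_j)$, and the membership $\prod_{j=1}^k x_{i_j} = u \in I$ are all immediate from the construction; the real work is in verifying the hypothesis $\mathfrak{m}\setminus x_{i_j} \notin \mathrm{Ass}(R/J_j)$ for every $j$.

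To handle that hypothesis I would regard $J_j$ as a squarefree monomial ideal in the subring $R_j = K[x_\ell : \ell \neq i_j]$ in $n-1$ variables. Using the standard fact that associated primes behave well under polynomial ring extension, $\mathfrak{m}\setminus x_{i_j} \in \mathrm{Ass}_R(R/J_j R)$ is equivalent to $(x_\ell : \ell \neq i_j) \in \mathrm{Ass}_{R_j}(R_j/J_j)$, which by the induction hypothesis occurs iff $J_j$ equals the maximal ideal of $R_j$. If this equality fails for every $j$, then Corollary \ref{Cor.maximal-ideal.1} applies and yields $\mathfrak{m} \notin \mathrm{Ass}(R/I)$. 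The main obstacle I anticipate is the edge case in which $J_j$ does equal the maximal ideal of $R_j$ for some $j$, since then the Corollary cannot be invoked directly. The resolution is combinatorial: in that case $x_\ell \in I$ for every $\ell \neq i_j$, while squarefreeness together with $I \neq \mathfrak{m}$ forces $x_{i_j} \notin I$ and then collapses $I$ to $(x_\ell : \ell \neq i_j)$, a monomial prime ideal distinct from $\mathfrak{m}$ whose only associated prime is itself, so once again $\mathfrak{m} \notin \mathrm{Ass}(R/I)$.
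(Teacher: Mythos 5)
Your proposal is correct and follows essentially the same route as the paper: induction on $n$ combined with Corollary \ref{Cor.maximal-ideal.1} applied to the variables dividing a minimal generator $u\in\mathcal{G}(I)$, with $J_i$ generated by the minimal generators avoiding $x_i$ and the induction hypothesis (in one fewer variable) used to decide whether $\mathfrak{m}\setminus x_i\in \mathrm{Ass}(R/J_i)$. The only cosmetic difference is how the exceptional case $J_j=\mathfrak{m}\setminus x_{i_j}$ is closed: you note that $I$ then collapses to a monomial prime different from $\mathfrak{m}$, whereas the paper derives a contradiction from $(I,x_j)=\mathfrak{m}$ forcing $x_s\in I$ against $x_s\nmid u$-minimality forcing $x_s\notin I$.
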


\begin{proof}
If $I=\mathfrak{m}$, then  $\mathrm{Ass}(R/I)=\{\mathfrak{m}\}$, and so $\mathfrak{m} \in \mathrm{Ass}(R/I)$. 
 Conversely, let $\mathfrak{m} \in \mathrm{Ass}(R/I)$.  We proceed by induction on $n$. If $n=1$, then we have $R=K[x_1]$, $\mathfrak{m}=(x_1)$, and 
  $I=(x_1)$. Since $\mathrm{Ass}(R/I)=\{(x_1)\}$, we deduce that the claim holds for $n=1$. 
  Now, assume that $n>1$, $\mathfrak{m} \in \mathrm{Ass}(R/I)$, 
   and that the claim has been shown   for all values less than $n$. Because  $\mathfrak{m} \in \mathrm{Ass}(R/I)$, according to Proposition \ref{Pro.supp}, we must have    $\mathrm{supp}(I)=\{x_1, \ldots, x_n\}$. Suppose, on the contrary, that $I\neq \mathfrak{m}$. This means that there exists some $u\in \mathcal{G}(I)$ 
   such that $|\mathrm{supp}(u)|\geq 2$. Without loss of generality, assume that $u=\prod_{i=1}^m x_i$ with $2\leq m \leq n$. In particular, it follows from $u\in \mathcal{G}(I)$ that    $x_i\notin I$ for all $i=1, \ldots, m$.  Since $I$ is square-free, for all $i=1, \ldots, m$, we can write $I=x_iH_i+ J_i$
    with $x_i\notin \mathrm{supp}(J_i)$.  In particular, we have $(I, x_i)=(J_i, x_i)$ for all $i=1, \ldots, m$. 
    If $\mathfrak{m}\setminus x_i \notin \mathrm{Ass}(R/J_i)$  for all $i=1, \ldots, m$, and by remembering that $\prod_{i=1}^m x_i \in I$, then Corollary 
    \ref{Cor.maximal-ideal.1} yields that $\mathfrak{m}\notin  \mathrm{Ass}(R/I)$, which contradicts our assumption. Consequently, we obtain there exists some 
    $1\leq j \leq m$ such that  $\mathfrak{m}\setminus x_j \in \mathrm{Ass}(R/J_j)$. It follows now from the  induction hypothesis  that $J_j=\mathfrak{m}\setminus x_j$. 
    Thanks to $(I, x_j)=(J_j, x_j)$, we get $(I,x_j)=\mathfrak{m}$. This implies that $x_s\in I$ for all $s\in \{1, \ldots, m\}\setminus \{j\}$, while $x_i\notin I$ for all $i=1, \ldots, m$. 
    This leads to a contradiction. In addition, on account of   $\mathrm{supp}(I)=\{x_1, \ldots, x_n\}$, we can deduce that $I=\mathfrak{m}$. 
    This completes the inductive step, and therefore the assertion  has been shown  by induction.
\end{proof}


Corollary   \ref{Cor.maximal-ideal.2} follows directly from Proposition  \ref{Pro.maximal-ideal.1}, which sets forth the second   criterion. To reach this purpose, we 
state and prove Proposition  \ref{Pro.maximal-ideal.1} as follows.

\begin{proposition}    \label{Pro.maximal-ideal.1}
Let $I\subset R=K[x_1, \ldots, x_n]$ be a monomial ideal,  $\mathfrak{p}$ a monomial prime  ideal in $R$,  $t\geq 1$,  $\alpha_1, \ldots, \alpha_s$ be positive integers, and  $y_1, \ldots, y_s$ be distinct variables in $R$  such that  $\mathfrak{p}\notin \mathrm{Ass}(R/(I^t,  y_{1}^{\alpha_1}))$ and, for each  $i=2, \ldots, s$,
  $\mathfrak{p}\notin \mathrm{Ass}(R/((I^t:\prod_{j=1}^{i-1} y_j^{\alpha_j}), y_{i}^{\alpha_{i}}))$.  Then 
\begin{itemize}
\item[(i)] $\mathfrak{p}\in \mathrm{Ass}(R/I^t)$ if and only if $\mathfrak{p}\in \mathrm{Ass}(R/(I^t: \prod_{i=1}^sy_i^{\alpha_i}))$. 
\item[(ii)] If $\prod_{i=1}^{s}y_i^{\alpha_i} \in I^\ell$ for some $\ell \geq t$, then  $\mathfrak{p}\notin \mathrm{Ass}(R/I^t)$.
\end{itemize}
\end{proposition}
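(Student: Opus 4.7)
The plan is to mirror the inductive structure of the proof of Theorem \ref{Th.maximal-ideal.1}, though the argument is considerably shorter because the hypotheses here are phrased directly in terms of non-membership of $\mathfrak{p}$ in certain $\mathrm{Ass}$ sets, rather than through auxiliary ideals $J_i$. In particular, I do not need to establish any ideal identity analogous to $((I^t:M), y_s^{\alpha_s})=((J_s:M), y_s^{\alpha_s})$; the hypotheses already give me what I need at each inductive step.

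For part (i), the direction ($\Leftarrow$) is straightforward: if $\prod_{i=1}^s y_i^{\alpha_i}\in I^t$, then $\mathrm{Ass}(R/(I^t:\prod_{i=1}^s y_i^{\alpha_i}))=\mathrm{Ass}(R/R)=\emptyset$ and the implication is vacuous; otherwise Fact \ref{fact2} (with $x=\prod_{i=1}^s y_i^{\alpha_i}$) furnishes a short exact sequence and Fact \ref{fact3} then gives $\mathrm{Ass}(R/(I^t:\prod_{i=1}^s y_i^{\alpha_i}))\subseteq \mathrm{Ass}(R/I^t)$. For the direction ($\Rightarrow$), I would induct on $s$. In the base case $s=1$, note first that $y_1^{\alpha_1}\notin I^t$, for if $y_1^{\alpha_1}\in I^t$, then $(I^t,y_1^{\alpha_1})=I^t$ and the hypothesis would give $\mathfrak{p}\notin \mathrm{Ass}(R/I^t)$, contradicting the assumption $\mathfrak{p}\in\mathrm{Ass}(R/I^t)$. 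Hence Fact \ref{fact2} applies and Fact \ref{fact3} yields
$$\mathrm{Ass}(R/I^t)\subseteq \mathrm{Ass}(R/(I^t:y_1^{\alpha_1}))\cup \mathrm{Ass}(R/(I^t,y_1^{\alpha_1})).$$
Since $\mathfrak{p}$ is excluded from the second set by hypothesis, it must belong to the first.

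For the inductive step, set $M:=\prod_{j=1}^{s-1}y_j^{\alpha_j}$ and assume by induction that $\mathfrak{p}\in\mathrm{Ass}(R/(I^t:M))$. An argument analogous to the base case shows that $y_s^{\alpha_s}\notin (I^t:M)$ (otherwise $((I^t:M),y_s^{\alpha_s})=(I^t:M)$ and the $i=s$ hypothesis contradicts the induction hypothesis), so Fact \ref{fact2} applies and Fact \ref{fact3} gives
$$\mathrm{Ass}(R/(I^t:M))\subseteq \mathrm{Ass}(R/((I^t:M):y_s^{\alpha_s}))\cup \mathrm{Ass}(R/((I^t:M),y_s^{\alpha_s})).$$
The second set does not contain $\mathfrak{p}$ by the $i=s$ hypothesis, so $\mathfrak{p}\in\mathrm{Ass}(R/((I^t:M):y_s^{\alpha_s}))=\mathrm{Ass}(R/(I^t:\prod_{j=1}^s y_j^{\alpha_j}))$, completing the induction.

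For part (ii), $\ell\geq t$ gives $I^\ell\subseteq I^t$, so $\prod_{i=1}^s y_i^{\alpha_i}\in I^t$ and $(I^t:\prod_{i=1}^s y_i^{\alpha_i})=R$. By part (i) this forces $\mathfrak{p}\notin \mathrm{Ass}(R/I^t)$, exactly as in Theorem \ref{Th.maximal-ideal.1}(ii). I do not expect any serious obstacle in this proof; the only point that requires care is ensuring at each step that the element being colonized is not already contained in the ambient ideal, so that Fact \ref{fact2} is legitimately applicable, and this is handled by a short contradiction argument using the corresponding hypothesis.
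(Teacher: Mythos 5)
Your proposal is correct and follows essentially the same route as the paper: induction on $s$ using the short exact sequence of Fact \ref{fact2} and the containment of Fact \ref{fact3}, with the hypothesis excluding $\mathfrak{p}$ from $\mathrm{Ass}$ of the sum ideal at each step, and part (ii) deduced from part (i) via $(I^t:\prod_{i=1}^{s}y_i^{\alpha_i})=R$. The only difference is that you explicitly verify the element is outside the ideal before invoking Fact \ref{fact2}, a point of rigor the paper glosses over.
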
 

\begin{proof}
(i) ($\Leftarrow$) It can be shown by a similar proof in Theorem  \ref{Th.maximal-ideal.1}(i). 

($\Rightarrow$) Let $\mathfrak{p}\in \mathrm{Ass}(R/I^t)$. To show   $\mathfrak{p}\in \mathrm{Ass}(R/(I^t: \prod_{i=1}^sy_i^{\alpha_i}))$,  we proceed by induction on  $s$. Based on   Fact \ref{fact3}, we have 
$$\mathrm{Ass}(R/I^t) \subseteq \mathrm{Ass}(R/(I^t:y_1^{\alpha_1})) \cup \mathrm{Ass}(R/(I^t, y_1^{\alpha_1})).$$
Due to  $\mathfrak{p}\notin \mathrm{Ass}(R/(I^t, y_1^{\alpha_1}))$, this implies  that  $\mathfrak{p}\in \mathrm{Ass}(R/(I^t:y_1^{\alpha_1}))$, and so the assertion  is true for the case in which $s=1$. Suppose that our claim has been proved for all values  less than $s>1$, and that $\mathfrak{p}\in \mathrm{Ass}(R/I^t)$. Set   $M:=\prod_{j=1}^{s-1}y_j^{\alpha_j}$. It follows from the induction hypothesis that   $\mathfrak{p}\in \mathrm{Ass}(R/(I^t:M))$. 
According to  Fact \ref{fact3}, we have 
\begin{equation} \label{14}
\mathrm{Ass}(R/(I^t:M))\subseteq \mathrm{Ass}(R/((I^t:M):y_s^{\alpha_s}) )\cup \mathrm{Ass}(R/((I^t:M),y_s^{\alpha_s})).
\end{equation} 
In  light of  $\mathfrak{p}\notin\mathrm{Ass}(R/((I^t:M), y_s^{\alpha_s}))$, one can conclude from 
 $(\ref{14})$ that  $\mathfrak{p}\in \mathrm{Ass}(R/((I^t:M): y_s^{\alpha_s}))=\mathrm{Ass}(R/(I^t:\prod_{i=1}^sy_i^{\alpha_i}))$. 
This completes the inductive step, and therefore the assertion  has been shown  by induction. 

(ii) It is sufficient    to mimic the proof of Theorem  \ref{Th.maximal-ideal.1}(ii).
\end{proof}


\begin{corollary}    \label{Cor.maximal-ideal.2}
Let $I\subset R=K[x_1, \ldots, x_n]$ be a monomial ideal, $\mathfrak{m}=(x_1, \ldots, x_n)$,  $\alpha_1, \ldots, \alpha_s$ be positive integers, and  $y_1, \ldots, y_s$ be distinct variables in $R$  such that   $\mathfrak{m}\notin \mathrm{Ass}(R/(I,  y_{1}^{\alpha_1}))$   and,   for each  $i=2, \ldots, s$, we have   
$\mathfrak{m}\notin \mathrm{Ass}(R/((I:\prod_{j=1}^{i-1} y_j^{\alpha_j}), y_{i}^{\alpha_{i}}))$.  
If  $\prod_{i=1}^{s}y_i^{\alpha_i} \in I,$  then $\mathfrak{m}\notin  \mathrm{Ass}(R/I)$. 
\end{corollary}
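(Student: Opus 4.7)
The plan is to observe that Corollary \ref{Cor.maximal-ideal.2} is simply the specialization of Proposition \ref{Pro.maximal-ideal.1} to the case $t=1$, $\ell=1$, and $\mathfrak{p}=\mathfrak{m}$, so the proof should be a one-line reduction rather than a fresh argument.

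Concretely, first I would set $t=1$ in Proposition \ref{Pro.maximal-ideal.1}, so that $I^t = I$ throughout. Next I would take the prime $\mathfrak{p}$ appearing in the hypothesis of that proposition to be the maximal ideal $\mathfrak{m}=(x_1,\ldots,x_n)$, which is certainly a monomial prime. With these choices, the two assumptions of Proposition \ref{Pro.maximal-ideal.1}, namely $\mathfrak{p}\notin \mathrm{Ass}(R/(I^t, y_1^{\alpha_1}))$ and, for each $i=2,\ldots,s$, $\mathfrak{p}\notin \mathrm{Ass}(R/((I^t:\prod_{j=1}^{i-1} y_j^{\alpha_j}), y_i^{\alpha_i}))$, become word for word the standing hypotheses of the corollary. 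Finally, the assumption $\prod_{i=1}^{s}y_i^{\alpha_i}\in I$ of the corollary is exactly the hypothesis $\prod_{i=1}^{s}y_i^{\alpha_i}\in I^\ell$ of Proposition \ref{Pro.maximal-ideal.1}(ii) with $\ell = 1 \geq 1 = t$.

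Therefore I would conclude by invoking part (ii) of Proposition \ref{Pro.maximal-ideal.1} directly, yielding $\mathfrak{m}\notin \mathrm{Ass}(R/I)$, which is the conclusion of the corollary. There is essentially no obstacle here; the only thing to be careful about is checking that the indexing of the auxiliary variables $y_1,\ldots,y_s$ and their exponents $\alpha_1,\ldots,\alpha_s$ in the hypothesis of the corollary matches the order in which Proposition \ref{Pro.maximal-ideal.1} uses them (the conditions are required in sequence for $i=1,\ldots,s$), but this is immediate from the statements. Thus the proof proposal is simply: apply Proposition \ref{Pro.maximal-ideal.1}(ii) with $t=\ell=1$ and $\mathfrak{p}=\mathfrak{m}$.
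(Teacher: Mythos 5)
Your proposal is correct and matches the paper exactly: the paper presents Corollary \ref{Cor.maximal-ideal.2} as an immediate consequence of Proposition \ref{Pro.maximal-ideal.1}, and specializing that proposition's part (ii) to $t=\ell=1$ and $\mathfrak{p}=\mathfrak{m}$ is precisely the intended one-line derivation. The hypotheses line up word for word, so nothing further is needed.
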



To demonstrate the utility of  Corollary \ref{Cor.maximal-ideal.2},  we present   the following  example. We first review  its background. 
Let $t\geq 2$ be an integer, and  $I=(x^t, xy^{t-2}z, y^{t-1}z)$ be a monomial ideal in $R=K[x,y,z]$ and $\mathfrak{m}=(x,y,z)$.   It has already been shown in
 \cite[Proposition 2.5]{MST} that 
  \[
\mathrm{depth}(R/I^s) =
    \begin{dcases}
    1 & \text{if }  s \leq t-1 \\
   0   & \text{if } s \geq t. \\
                \end{dcases}
\]
  In addition, note that  $I$ has the following minimal primary  decomposition 
 $$I=(x,y^{t-1}) \cap (x^t, y^{t-2}) \cap (x^t,z).$$  This implies that $\mathrm{Min}(I)= \{(x,y), (x,z)\}.$ Moreover,  since  $\mathfrak{m}\in \mathrm{Ass}(R/I^s)$  if and  only if $\mathrm{depth}(R/I^s)=0$,  we can rewrite the above result as follows 
  \[
\mathrm{Ass}(R/I^s) =
    \begin{dcases}
    \{(x,y), (x,z)\} & \text{if } s \leq t-1 \\
\{(x,y), (x,z), \mathfrak{m}\}  & \text{if } s \geq t. \\
                \end{dcases}
\]
For $s\geq t$, in  \cite[Proposition 2.5]{MST}, the authors provided an $I^s$-corner element, that is, $u:=x^{ts-t^2+t}y^{t^2-2t}z^{t-1}$, such that  
$\mathfrak{m}=(I^s:u)$, and so $\mathfrak{m}\in \mathrm{Ass}(R/I^s)$. This yields that $\mathrm{depth}(R/I^s)=0$ for all  $s\geq t$. 
To show $\mathrm{depth}(R/I^s)=1$  for all $s\leq t-1$, the authors relied on the theory of Buchberger graphs.  
In the following example, we prove that $\mathfrak{m} \notin \mathrm{Ass}(R/I^s)$ for $t = 5$ and $s = 3$, by applying Corollary~\ref{Cor.maximal-ideal.2}.
\begin{example}\label{App.2}
\em{
Consider the monomial ideal  $I=(x^5, xy^3z, y^4z)$  in the polynomial ring $R=K[x,y,z]$.
 To use Corollary~\ref{Cor.maximal-ideal.2}, it is enough to check the following claims:
 
 \bigskip
\textbf{Claim 1:} $\mathfrak{m}\notin \mathrm{Ass}(R/(I^3, z))$. Using the binomial expansion, we obtain
\begin{align}
I^3
&= \sum_{a+b+c=3} (x^5)^a (x y^{3} z)^b (y^{4} z)^cR \label{4.1} \\
&= x^{15}R + \sum_{\substack{a+b+c=3,~ 1\leq b+c\leq 3}}
(x^5)^a (x y^{3} z)^b (y^{4} z)^cR.
\notag
\end{align}
On account of $b+c\geq 1$, we must have $b\geq 1$ or $c\geq 1$, and so we can deduce 
\[
\sum_{\substack{a+b+c=3,~ 1\leq b+c\leq 3}}
(x^5)^a (x y^{3} z)^b (y^{4} z)^cR \subseteq zR.
\]
This implies  that $(I^3, z)=(x^{15}, z)$, and hence $\mathrm{Ass}(R/(I^3, z)) = \{(x,z)\}.$ 
Therefore, $\mathfrak{m}\notin \mathrm{Ass}(R/(I^3, z))$, as claimed.

\bigskip
\textbf{Claim 2:} $\mathfrak{m}\notin \mathrm{Ass}(R/((I^3 : z), y^{3}))$.
 From (\ref{4.1}), we get the following 
\[
(I^3 : z)
= x^{15}R
+ \sum_{\substack{a+b+c=3,~ 1\leq b+c\leq 3}}
x^{5a+b}\, y^{3b+4c}\, z^{b+c-1}R.
\]
It follows from  $b+c\geq 1$ that  $b\geq 1$ or $c\geq 1$. If $b\geq 1$ (resp. $c\geq 1$), then
$b(t-2)=3b\geq 3$ (resp. $c(t-1)=4c\geq 3$). Hence, we can derive that
\[
\sum_{\substack{a+b+c=3,~ 1\leq b+c\leq 3}}
x^{5a+b}\, y^{3b+4c}\, z^{b+c-1}R
\subseteq y^{3}R.
\]
Thus, $((I^3 : z), y^{3}) = (x^{15}, y^{3}),$ and so $\mathrm{Ass}(R/((I^3 : z), y^{3})) = \{(x,y)\}.$ 
This gives rise to $\mathfrak{m}\notin \mathrm{Ass}(R/((I^3 : z), y^{3})),$ as required.
 
\bigskip
\textbf{Claim 3:}  $\mathfrak{m}\notin \mathrm{Ass}\big(R/((I^3 : zy^{3}), x^{11})\big).$ Note that $x^{11}\in (I^3 : zy^{3})$. 
 In view of  (\ref{4.1}), we can deduce  the following decomposition
 \begin{align*}
(I^3 : zy^{3}) &=(y^9 z^2,x y^8 z^2,x^2 y^7 z^2,x^3 y^6 z^2, x^5 y^5 z,x^6 y^4 z, x^7 y^3 z,x^{10} y,x^{11}:zy^3)\\
&=(y^{9}, xy^{8}, x^{2}y^{7}, x^{3}y^{6}, x^{5}y^{5}, x^{6}y^{4}, x^{7}y^{3}, x^{10}y, x^{11}) \cap
 (z^{2}, x^{5}z, x^{10}).
 \end{align*}
This implies that  $\mathfrak{m}\notin \mathrm{Ass}\big(R/((I^3 : zy^{3}), x^{11})\big).$
 
Since $x^{11}y^3z=(x^5)^2(xy^3z)\in I^3$, it follows now from Corollary \ref{Cor.maximal-ideal.2} that $\mathfrak{m} \notin \mathrm{Ass}(R/I^3)$, as desired. 
}
\end{example}


Here, we provide the next criterion to determine the presence of the maximal ideal in the following theorem. 

\begin{theorem} \label{Th.Not.maximal.1}
Suppose that    $I \subset R=K[x_1, \ldots, x_n]$ with $n\geq 2$,   is   a monomial ideal, $\mathfrak{m}=(x_1, \ldots, x_n)$, and 
  $\mathcal{G}(I)=\{x^{r_{1,1}}_1\cdots x^{r_{1,n}}_n, \ldots, x^{r_{k,1}}_1\cdots x^{r_{k,n}}_n\}$
  such that  there exist some  $1\leq i\neq j \leq n$ with 
  $$r_{1,i} \geq r_{2,i} \geq \cdots \geq r_{k, i} \quad \text{and} \quad r_{1,j} \geq r_{2,j} \geq \cdots \geq r_{k, j}.$$ 
   Then   $\mathfrak{m}\notin \mathrm{Ass}(R/I)$. 
\end{theorem}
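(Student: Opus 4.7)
The plan is to argue by contradiction using the corner-element characterization in Definition \ref{Def. Corner elements}. Suppose $\mathfrak{m}\in\mathrm{Ass}(R/I)$, so that there exists an $I$-corner element $u\in R$, meaning $u\notin I$ while $x_\ell u\in I$ for every $\ell=1,\ldots,n$. Write $d_\ell=\deg_{x_\ell}(u)$ for the exponent of $x_\ell$ in $u$. The goal is then to derive a contradiction from the two memberships $x_i u,x_j u\in I$ combined with the double monotonicity hypothesis on the $i$-th and $j$-th exponent columns.

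Since $x_i u\in I$, there exists a generator $u_a=x_1^{r_{a,1}}\cdots x_n^{r_{a,n}}$ of $I$ dividing $x_i u$; this is equivalent to $r_{a,i}\leq d_i+1$ together with $r_{a,\ell}\leq d_\ell$ for all $\ell\neq i$. Because $u\notin I$, the monomial $u_a$ does not divide $u$, so at least one exponent inequality of $u_a$ versus $u$ must be strict in the reverse direction; the only slot where that is permitted is $\ell=i$, forcing $r_{a,i}=d_i+1$. An entirely parallel argument starting from $x_j u\in I$ produces a generator $u_b=x_1^{r_{b,1}}\cdots x_n^{r_{b,n}}$ satisfying $r_{b,j}=d_j+1$ and $r_{b,\ell}\leq d_\ell$ for every $\ell\neq j$.

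Now I invoke the hypothesis that both sequences $(r_{l,i})_{l=1}^{k}$ and $(r_{l,j})_{l=1}^{k}$ are non-increasing in $l$, and split on the relative order of $a$ and $b$. In the case $a\leq b$, monotonicity in the $j$-th column yields $r_{a,j}\geq r_{b,j}=d_j+1>d_j$, which contradicts $r_{a,j}\leq d_j$ (applicable since $j\neq i$). In the case $a>b$, monotonicity in the $i$-th column yields $r_{b,i}\geq r_{a,i}=d_i+1>d_i$, contradicting $r_{b,i}\leq d_i$. Either case gives a contradiction, so no $I$-corner element exists and $\mathfrak{m}\notin\mathrm{Ass}(R/I)$. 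The main subtlety to articulate cleanly is the middle step, namely that the failure $u_a\nmid u$ must be \emph{concentrated at the $x_i$ coordinate} (and symmetrically for $u_b$ at $x_j$); once this critical-coordinate observation is in place, the simultaneous monotonicity in two columns immediately forces the contradictory pair of inequalities, so the rest of the argument is routine.
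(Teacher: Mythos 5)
Your proof is correct and follows essentially the same route as the paper's: both argue by contradiction from a corner element, extract one generator per distinguished variable whose divisibility failure is concentrated in that variable's exponent, and then play the two non-increasing columns against each other to force incompatible orderings of the two generator indices. The only cosmetic difference is that you phrase the final contradiction as a case split on $a\leq b$ versus $a>b$, whereas the paper derives $\lambda<\gamma$ and $\gamma<\lambda$ simultaneously.
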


\begin{proof} 
Without loss of generality, we may assume that   $r_{1,i} \geq r_{2,i} \geq \cdots \geq r_{k, i}$ for $i=1,2$. 
On the contrary, assume that $\mathfrak{m}\in \mathrm{Ass}(R/I)$. This implies that there exists some monomial $h\in R$ such that  $h\notin I$ and 
  $x_ih\in I$ for all $i=1, \ldots, n$. In particular, we have $x_1h, x_2h \in I$.   Let $h=\prod_{i=1}^n x_i^{\theta_i}$. We thus get 
 $x_1^{\theta_1 +1} \prod_{i=2}^n x_i^{\theta_i}\in I$ and $x_1^{\theta_1} x_2^{\theta_2+1}\prod_{i=3}^n x_i^{\theta_i} \in I$. 
  This gives  that there exist  $1\leq \lambda \leq k$  and $1\leq \gamma \leq k$  such that 
  \begin{equation} \label{15}
  x^{r_{\lambda,1}}_1\cdots x^{r_{\lambda,n}}_n \mid x_1^{\theta_1 +1} \prod_{i=2}^n x_i^{\theta_i} \;  \text {and} \;  
  x^{r_{\gamma,1}}_1\cdots x^{r_{\gamma,n}}_n \mid x_1^{\theta_1} x_2^{\theta_2+1}\prod_{i=3}^n x_i^{\theta_i}.
  \end{equation}
       By virtue  of    $\prod_{i=1}^n x_i^{\theta_i}\notin I$,   we must have
      \begin{equation} \label{16}
      x^{r_{\lambda,1}}_1\cdots x^{r_{\lambda,n}}_n \nmid  \prod_{i=1}^n x_i^{\theta_i} \;\; \text{and} \;\;  
     x^{r_{\gamma,1}}_1\cdots x^{r_{\gamma,n}}_n \nmid  \prod_{i=1}^n x_i^{\theta_i}.
      \end{equation}
     It follows from (\ref{15}) and (\ref{16})  that  $\theta_1 < r_{\lambda,1} \leq \theta_1+1$, $\theta_2 < r_{\gamma,2} \leq \theta_2+1$, 
   $r_{\lambda, 2} \leq \theta_2$, and $r_{\gamma, 1} \leq \theta_1$. Consequently, we  obtain $r_{\gamma, 1} \leq \theta_1 < r_{\lambda,1}$ and 
   $r_{\lambda, 2} \leq \theta_2< r_{\gamma,2}$. One can immediately deduce  from  our assumption  that 
    $\gamma < \lambda$ and $\lambda < \gamma$. This leads to a contradiction. Accordingly, we conclude that   $\mathfrak{m}\notin \mathrm{Ass}(R/I)$.    
\end{proof}


\begin{example} \label{Exam.Not.maximal.1}
\em{
Let $R = K[x,y,z,t]$ and consider the monomial ideal
\[
I = (x^5y z^4 ,\; x^4 z^3 t^2,\; x^3  y^2 z^2,\; x^2 z t^3).
\]
Then $\mathcal{G}(I) = \{x^5y z^4 ,\; x^4 z^3 t^2,\; x^3  y^2 z^2,\; x^2 z t^3\}.$ 
Note that the exponents of $x$ satisfy $5 \geq 4 \geq 3 \geq 2$, and the exponents of $z$ satisfy $4 \geq 3 \geq 2 \geq 1$. 
Hence the hypothesis of Theorem~\ref{Th.Not.maximal.1} is fulfilled. Therefore, by the theorem we obtain
 $\mathfrak{m} = (x,y,z,t) \notin \mathrm{Ass}(R/I).$
 }
\end{example}


For the rest of this section, we shall proceed according to the following criteria. To this end, we establish the following lemma, whose proof 
will be applied in Proposition  \ref{Pro.Maximal.Ass.2}.

\begin{lemma}\label{Lem.Maximal.Ass.1}
Let $L=uI+J \subset R=K[x_1, \ldots, x_n]$ be a monomial ideal such that $\mathrm{supp}(u) \cap (\mathrm{supp}(I) \cup \mathrm{supp}(J))=\emptyset$. 
Then, for all $t\geq 1$,  if $\mathfrak{m}\in \mathrm{Ass}(L^t)$, then  $\mathfrak{m}\in \mathrm{Ass}(L^t,u^t)$. 
\end{lemma}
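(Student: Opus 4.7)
The plan is to invoke the short exact sequence of Fact \ref{fact2} with the distinguished element $u^t$:
\[
0 \longrightarrow R/(L^t : u^t) \stackrel{\psi}{\longrightarrow} R/L^t \stackrel{\phi}{\longrightarrow} R/(L^t, u^t) \longrightarrow 0,
\]
which is valid whenever $u^t \notin L^t$ (the degenerate case $u^t \in L^t$ gives $(L^t, u^t) = L^t$, making the conclusion trivial, and I would dispose of it first; likewise the case $u = 1$ is vacuous since then $L = I + J$). Fact \ref{fact3} then yields
\[
\mathrm{Ass}(R/L^t) \subseteq \mathrm{Ass}(R/(L^t : u^t)) \cup \mathrm{Ass}(R/(L^t, u^t)),
\]
so it suffices to prove $\mathfrak{m} \notin \mathrm{Ass}(R/(L^t : u^t))$.

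The central step is to identify the colon ideal, namely $(L^t : u^t) = (I + J)^t$. I would expand $L^t = \sum_{k=0}^{t} u^k I^k J^{t-k}$ and exploit the key hypothesis $\mathrm{supp}(u) \cap (\mathrm{supp}(I) \cup \mathrm{supp}(J)) = \emptyset$ to separate every monomial into its $\mathrm{supp}(u)$-part and its $\mathrm{supp}(u)$-free part. For a monomial $g$: if $g \in I^k J^{t-k}$ for some $0 \le k \le t$, then $u^k g \in u^k I^k J^{t-k} \subseteq L^t$, hence $u^t g = u^{t-k}(u^k g) \in L^t$, so $g \in (L^t : u^t)$. Conversely, if $u^t g \in L^t$, some monomial generator $u^k \alpha\beta$ (with $\alpha \in I^k$, $\beta \in J^{t-k}$) divides $u^t g$; because $\alpha\beta$ has support disjoint from $\mathrm{supp}(u)$, it must in fact divide the $\mathrm{supp}(u)$-free part of $g$, forcing $g \in I^k J^{t-k} \subseteq (I+J)^t$.

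To finish, observe that $\mathrm{supp}((I+J)^t) = \mathrm{supp}(I) \cup \mathrm{supp}(J)$ contains no variable from $\mathrm{supp}(u)$. Proposition \ref{Pro.supp} then implies that no associated prime of $(I+J)^t$ can contain any variable of $\mathrm{supp}(u)$, so none of them equals $\mathfrak{m}$. Therefore $\mathfrak{m} \notin \mathrm{Ass}(R/(L^t : u^t))$, and the desired conclusion $\mathfrak{m} \in \mathrm{Ass}(R/(L^t, u^t))$ follows from the containment above. The main obstacle I anticipate is the careful verification of the reverse containment in the colon identification: one must argue cleanly that when a monomial generator $u^k \alpha\beta$ divides $u^t g$, the $\mathrm{supp}(u)$-free factor $\alpha\beta$ actually divides $g$ itself rather than being "absorbed" into the $u^t$ factor, and this is precisely where the disjoint-support hypothesis does the work.
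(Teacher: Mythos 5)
Your proof is correct, and its mathematical core coincides with the paper's: both arguments rest on the identification $(L^t:u^t)=(I+J)^t$, obtained from the expansion $L^t=\sum_{k=0}^{t}u^kI^kJ^{t-k}$ together with the disjoint-support hypothesis, followed by the observation via Proposition~\ref{Pro.supp} that $\mathfrak{m}$ cannot be associated to an ideal whose support omits the variables of $u$. The one genuine difference lies in how the inclusion $\mathrm{Ass}(R/L^t)\subseteq \mathrm{Ass}(R/(L^t:u^t))\cup \mathrm{Ass}(R/(L^t,u^t))$ is produced: you read it off directly from the short exact sequence of Fact~\ref{fact2} combined with Fact~\ref{fact3}, whereas the paper instead proves the intersection decomposition $L^t=(L^t:u^t)\cap(L^t,u^t)$ and deduces the same inclusion from the embedding of $R/(A\cap B)$ into $R/A\oplus R/B$. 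Your route is shorter, since it bypasses the somewhat delicate verification of that intersection identity; the paper's extra effort is not wasted in context, though, because the identities $A=(I+J)^t$ and $L^t=A\cap B$ are reused verbatim in the proof of Proposition~\ref{Pro.Maximal.Ass.2}. One small caveat: your dismissal of the case $u=1$ as ``vacuous'' is not quite right --- for $u=1$ the hypothesis is satisfied but the conclusion reads $\mathfrak{m}\in\mathrm{Ass}(R/R)=\emptyset$, so the statement would actually be false whenever $\mathfrak{m}\in\mathrm{Ass}(R/(I+J)^t)$; the lemma tacitly assumes $u\neq 1$, as does the paper's own proof, whose support argument likewise requires $\mathrm{supp}(u)\neq\emptyset$.
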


\begin{proof}
 Fix $t\geq 1$ and let $\mathfrak{m}\in \mathrm{Ass}(L^t)$.  Note that $(L:u)=I+J$. We first  demonstrate  that  $(L^t:u^t)=(L:u)^t$.  By 
the binomial expansion theorem, we have  $(I+J)^t=\sum_{r=0}^tI^rJ^{t-r}$, and therefore  we get the following equalities 
  $$(L^t:u^t)=\sum_{r=0}^t (u^r I^r J^{t-r}:u^t)=\sum_{r=0}^t  I^r J^{t-r}=(L:u)^t.$$   
 We now verify that   $L^t=(L^t:u^t) \cap (L^t,u^t)$. By  virtue of  $L^t \subseteq (L^t:u^t)$ and $L^t \subseteq  (L^t,u^t)$,  we obtain    $L^t \subseteq (L^t:u^t) \cap (L^t,u^t)$.  To show  the reverse inclusion, pick a  monomial  $f\in (L^t:u^t) \cap (L^t,u^t)$. This implies that $fu^t \in L^t$ and $f\in (L^t,u^t)$.    It follows from $f\in (L^t, u^t)$ that $f\in L^t$ or $u^t \mid f$.  On the contrary, assume that  $f\notin L^t$.  Then 
  $u^t  \mid  f$.    Based on the binomial expansion theorem,  we get   $L^t=\sum_{r=0}^t u^r I^r J^{t-r}$.  Hence, we get $fu^t \in  u^r I^r J^{t-r}$ for some 
   $0\leq r \leq t$. This yields that  $f \in  (u^r I^r J^{t-r}: u^t)$. As   $0\leq r \leq t$ and $\mathrm{supp}(u) \cap (\mathrm{supp}(I) \cup \mathrm{supp}(J))=\emptyset$, we obtain   $f \in   I^r J^{t-r}$. Due to $r\leq t$ and $u^t \mid f$, this gives that $u^r\mid f$, and so $f\in u^rR$. Consequently, we have 
   $f\in u^rR \cap I^r J^{t-r}$. Because $\mathrm{supp}(u) \cap (\mathrm{supp}(I) \cup \mathrm{supp}(J))=\emptyset$, this leads to 
  $ u^rR \cap I^r J^{t-r}= u^r I^r J^{t-r}$, and so  $f\in u^r I^r J^{t-r}$. This implies that  $f\in L^t$, which is a contradiction. Accordingly, we must have 
$f\in L^t$, and hence $L^t=(L^t:u^t) \cap (L^t,u^t).$ Since $(L^t:u^t)=(L:u)^t$, we deduce that  $L^t=(L:u)^t \cap (L^t,u^t)$ is  a  decomposition of   $L^t$. Due to $(L:u)^t=(I+J)^t$ and $\mathrm{supp}(u) \cap (\mathrm{supp}(I) \cup \mathrm{supp}(J))=\emptyset$, Proposition \ref{Pro.supp} yields that 
  $\mathfrak{m}\notin \mathrm{Ass}(L:u)^t$, and by virtue of $\mathfrak{m}\in \mathrm{Ass}(L^t)$, we can derive that  $\mathfrak{m}\in  \mathrm{Ass}(L^t,u^t)$.   This completes the proof. 
\end{proof}


\begin{remark}
\em{
It should be noted that the converse of Lemma \ref{Lem.Maximal.Ass.1} may not hold. To see a  counterexample, assume that  
 $L:=(x^{11}z, x^5y^4, x^6y^2, y^{11}z)$ in $R=K[x,y,z]$.  Put $I:=(x^{11}, y^{11})$ and $J:=(x^5y^4, x^6y^2)$. Then we can write $L=zI+J$
 such that $z\notin \mathrm{supp}(I) \cup \mathrm{supp}(J)$.  Using {\it Macaulay2} \cite{GS}, we obtain $(x,y,z)\in \mathrm{Ass}(L^3, z^3)$,  while 
 $(x,y,z)\notin \mathrm{Ass}(L^3)$. 
}
\end{remark}


\begin{proposition}\label{Pro.Maximal.Ass.2}
Let $L=uI+J \subset R=K[x_1, \ldots, x_n]$ be a monomial ideal such that  $\mathrm{supp}(u) \cap (\mathrm{supp}(I) \cup \mathrm{supp}(J))=\emptyset$. Then, for all $t\geq 1$,  if $\mathfrak{m}\in \mathrm{Ass}(L^t,u^t)$,  then at least one of the following statements holds:
\begin{itemize}
\item[(i)] $\mathfrak{m}\in \mathrm{Ass}(L^t)$;
\item[(ii)] $u=x_j$ for some $1\leq j \leq n$ and $\mathfrak{m}\setminus \{x_j\}\in \mathrm{Ass}(R/(I+J)^t)$. 
\end{itemize}
\end{proposition}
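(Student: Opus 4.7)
The plan is to extract from a corner element of $(L^t, u^t)$ either a corner element of $L^t$ (yielding~(i)) or a corner element of $(I+J)^t$ supported on the complementary set of variables (yielding~(ii)). I fix $t \geq 1$ and pick a monomial $f \notin (L^t, u^t)$ with $((L^t, u^t) : f) = \mathfrak{m}$, and write $f = vw$ with $\mathrm{supp}(v) \subseteq \mathrm{supp}(u)$ and $\mathrm{supp}(w) \cap \mathrm{supp}(u) = \emptyset$. Expanding $L^t = \sum_{r=0}^{t} u^r I^r J^{t-r}$ via the binomial theorem and using the disjointness hypothesis, a routine verification shows that for a monomial $m = m_1 m_2$ split in the same way, $m \in (L^t, u^t)$ iff $u^t \mid m_1$, or there exists $0 \leq r \leq t-1$ with $u^r \mid m_1$ and $m_2 \in I^r J^{t-r}$. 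Setting $r^* := \max\{r \geq 0 : u^r \mid v\}$, the hypothesis $f \notin (L^t, u^t)$ gives $r^* \leq t-1$ and $w \notin \bigcup_{r=0}^{r^*} I^r J^{t-r}$.

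The crucial observation is that for $x_i \notin \mathrm{supp}(u)$, multiplying $f$ by $x_i$ does not change $v$, so $u^t \nmid v$ is preserved and $x_i f \in (L^t, u^t)$ automatically means $x_i f \in L^t$, which unpacks to $x_i w \in \bigcup_{r=0}^{r^*} I^r J^{t-r}$. In particular $\mathrm{supp}(I) \cup \mathrm{supp}(J) \subseteq (L^t : f)$. I then distinguish two cases. In Case~(A), $x_j f \in L^t$ also holds for every $x_j \in \mathrm{supp}(u)$; then $\mathfrak{m} \subseteq (L^t : f)$, and since $f \notin L^t$, maximality of $\mathfrak{m}$ forces $(L^t : f) = \mathfrak{m}$, giving~(i). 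In Case~(B), some $x_j \in \mathrm{supp}(u)$ satisfies $x_j f \notin L^t$, hence $x_j f \in (u^t) \setminus L^t$; then $u^t \mid x_j v$ while $u^t \nmid v$, pinning the $x_j$-exponent of $v$ to exactly one less than that of $u^t$. Unpacking the characterization one step further, the membership $x_j f \in L^t$ would be equivalent to $w \in (I+J)^t$, so $x_j f \notin L^t$ yields $w \notin (I+J)^t$.

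The main obstacle is a rigidity step in Case~(B) that shows $\mathrm{supp}(u) = \{x_j\}$. If there were a second variable $x_k \in \mathrm{supp}(u)$, then in $x_k v$ the $x_j$-exponent would still fall short of the $u^t$-threshold, so $u^t \nmid x_k v$, and $x_k f \in (L^t, u^t)$ would be forced via some $u^r I^r J^{t-r}$-summand with $r \leq r^*$, demanding $w \in \bigcup_{r=0}^{r^*} I^r J^{t-r}$, which contradicts what has been established. Hence $\mathrm{supp}(u) = \{x_j\}$ and $u$ is a pure power of $x_j$. It remains to observe that $w$ serves as a corner element of $(I+J)^t$ with respect to $\mathfrak{m}\setminus\{x_j\}$: for each $x_i \neq x_j$, $x_i w \in \bigcup_{r=0}^{t-1} I^r J^{t-r} \subseteq (I+J)^t$ is immediate from Case~(B)'s setup, while $w \notin (I+J)^t$ was established above. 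Since $\mathfrak{m} \in \mathrm{Ass}(R/(L^t,u^t))$ forces $\mathrm{supp}(I) \cup \mathrm{supp}(J) = \mathfrak{m}\setminus\{x_j\}$ via Proposition~\ref{Pro.supp}, Theorem~\ref{SNQ.Th.3.3} applied to the polynomial subring on those variables yields $\mathfrak{m}\setminus\{x_j\} \in \mathrm{Ass}(R/(I+J)^t)$, proving~(ii).
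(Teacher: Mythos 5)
Your proof is correct, but it takes a genuinely different route from the paper. The paper's argument is homological: with $A:=(L^t:u^t)$ and $B:=(L^t,u^t)$ it applies Fact~\ref{fact3} to the Mayer--Vietoris-type sequence $0\to R/(A\cap B)\to R/A\oplus R/B\to R/(A+B)\to 0$, imports the identities $A=(I+J)^t$ and $A\cap B=L^t$ from the proof of Lemma~\ref{Lem.Maximal.Ass.1}, and then analyzes $A+B=(I+J)^t+u^tR$ as a disjoint-variable sum via Theorem~\ref{SNQ.Th.3.3} and Proposition~\ref{Pro.supp}. You instead argue at the level of corner elements, splitting a witness $f=vw$ along $\mathrm{supp}(u)$ and reading membership in $(L^t,u^t)$ off the binomial expansion of $L^t$; your dichotomy on whether some $x_j\in\mathrm{supp}(u)$ sends $f$ into $(u^t)\setminus L^t$ replaces the paper's dichotomy on which term of $\mathrm{Ass}(R/(A\cap B))\cup\mathrm{Ass}(R/(A+B))$ contains $\mathfrak{m}$. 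The paper's proof is shorter given that Lemma~\ref{Lem.Maximal.Ass.1} is already in hand; yours is self-contained and has the virtue of exhibiting the corner elements explicitly ($f$ for $L^t$ in case (i), $w$ for $(I+J)^t$ in case (ii)). One step you should make explicit in the rigidity argument: in Case (B) the pinning $\deg_{x_j}(v)=t\deg_{x_j}(u)-1$ forces $r^*=t-1$, and for a second variable $x_k\in\mathrm{supp}(u)$ the largest $r$ with $u^r\mid x_kv$ is again bounded by $t-1=r^*$ because the $x_j$-coordinate remains the binding constraint; without this the phrase ``forced via some summand with $r\leq r^*$'' is not justified, though it does check out. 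Finally, like the paper's own proof, your argument only yields $|\mathrm{supp}(u)|=1$ (so $u$ is a pure power of $x_j$) rather than literally $u=x_j$; you are candid about this, and it matches the paper's ``say $u=x_j$''.
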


\begin{proof}
For simplicity of notation, put $A:=(L^t: u^t)$ and $B:=(L^t,u^t)$. We consider the following short exact sequence
$$0\longrightarrow \frac{R}{A\cap B} {\longrightarrow} \frac{R}{A} \oplus \frac{R}{B} {\longrightarrow} \frac{R}{A+B}  \longrightarrow 0.$$
 By virtue of Fact \ref{fact3}, we can conclude  the following inclusion
 $$\mathrm{Ass}(\frac{R}{A})\cup \mathrm{Ass}(\frac{R}{B})=\mathrm{Ass}(\frac{R}{A} \oplus \frac{R}{B}) \subseteq \mathrm{Ass}(\frac{R}{A\cap B}) \cup \mathrm{Ass}(\frac{R}{A+B}).$$ 
 Based on the proof of Lemma \ref{Lem.Maximal.Ass.1}, we have $A=(L:u)^t=(I+J)^t$. Moreover, the assumption says that $\mathfrak{m}\in \mathrm{Ass}(R/B)$. 
 Hence, we obtain  $$\mathfrak{m}\in \mathrm{Ass}(\frac{R}{A\cap B}) \cup \mathrm{Ass}(\frac{R}{A+B}).$$ 
 Also, according to the proof of Lemma \ref{Lem.Maximal.Ass.1}, we have $L^t=A\cap B$. If $\mathfrak{m}\in \mathrm{Ass}(L^t)$, then we obtain  statement (i). 
 Hence,  let  $\mathfrak{m}\notin \mathrm{Ass}(L^t)$. This implies that $\mathfrak{m}\in \mathrm{Ass}(R/(A+B))$. Furthermore, we have the following equalities $$A+B=(I+J)^t+\sum_{i=0}^tu^iI^iJ^{t-i} + u^tR=(I+J)^t+ u^tR.$$ 
 Thanks to  $\mathrm{supp}(u) \cap (\mathrm{supp}(I) \cup \mathrm{supp}(J))=\emptyset$, we can derive from   Theorem \ref{SNQ.Th.3.3} that 
 $m\setminus \{x_i\} \in \mathrm{Ass}(R/(I+J)^t)$ for some $x_i\in \mathrm{supp}(u)$. It follows now from Proposition \ref{Pro.supp} 
 that $|\mathrm{supp}(I+J)|=n-1$. 
 Therefore, we must have  $|\mathrm{supp}(u)|=1$, say  $u=x_j$, where $1\leq j \leq n$. This gives rise to  $\mathfrak{m}\setminus \{x_j\}\in \mathrm{Ass}(R/(I+J)^t)$. 
 We thus get  statement (ii), as claimed.
 \end{proof}


The following example illustrates that $\mathfrak{m}$  may belong to $\mathrm{Ass}(L^t,u^t)$, even though both statements of Proposition \ref{Pro.Maximal.Ass.2}
 hold.
 
\begin{example}\label{Exam.Maximal.Ass.2}
\em{
Consider the wheel graph $W_6$, shown in Figure 1.  Also, let $L$ denote its cover ideal in the polynomial ring $R=K[x_1, \ldots, x_6]$, that is, 
\begin{align*}
L=&(x_1,x_2) \cap (x_2,x_3)\cap (x_3, x_4) \cap (x_4,x_5)\cap (x_5,x_1)\cap (x_1,x_6)\cap (x_2, x_6)\\
&\cap (x_3,x_6) \cap (x_4,x_6) \cap(x_5,x_6)\\
=&(x_2x_4x_5x_6,x_2x_3x_5x_6,x_1x_3x_5x_6,x_1x_3x_4x_6,x_1x_2x_4x_6,x_1x_2x_3x_4x_5).
\end{align*}
\begin{figure}[h]
\centering
\scalebox{1.2}{
\begin{pspicture}(0,-1.668125)(3.6828125,1.668125)
\psdots[dotsize=0.12](1.8209375,1.2296875)
\psdots[dotsize=0.12](2.8209374,0.2496875)
\psdots[dotsize=0.12](0.8209375,0.2496875)
\psdots[dotsize=0.12](1.1809375,-0.7503125)
\psdots[dotsize=0.12](2.3809376,-0.7703125)
\psline[linewidth=0.04cm](0.8209375,0.2496875)(1.8409375,1.2696875)
\psline[linewidth=0.04cm](1.8209375,1.2496876)(2.8009374,0.2696875)
\psline[linewidth=0.04cm](0.8009375,0.2296875)(0.8409375,0.3096875)
\psline[linewidth=0.04cm](0.8209375,0.2496875)(1.2009375,-0.7903125)
\psline[linewidth=0.04cm](1.1809375,-0.7503125)(2.3409376,-0.7503125)
\psdots[dotsize=0.12](1.8009375,0.0896875)
\psline[linewidth=0.04cm](2.3809376,-0.7703125)(2.8209374,0.2496875)
\psline[linewidth=0.04cm](1.8009375,1.2096875)(1.8609375,1.2096875)
\psline[linewidth=0.04cm](0.8209375,0.2496875)(1.8209375,0.1096875)
\psline[linewidth=0.04cm](1.1609375,-0.7103125)(1.1809375,-0.7103125)
\psline[linewidth=0.04cm](1.2009375,-0.7303125)(1.8009375,0.0896875)
\psline[linewidth=0.04cm](1.8209375,0.1096875)(2.3809376,-0.7703125)
\psline[linewidth=0.04cm](1.8209375,0.1096875)(2.8209374,0.2496875)
\psline[linewidth=0.04cm](1.7809376,0.1096875)(1.8009375,0.0896875)
\psline[linewidth=0.04cm](1.8009375,0.1096875)(1.8209375,1.2296875)
\usefont{T1}{ptm}{m}{n}
\rput(1.7823437,1.4796875){$x_1$}
\usefont{T1}{ptm}{m}{n}
\rput(3.2023437,0.2596875){$x_2$}
\usefont{T1}{ptm}{m}{n}
\rput(2.8614063,-0.7203125){$x_3$  }
\usefont{T1}{ptm}{m}{n}
\rput(0.76234376,-0.7203125){$x_4$}
\usefont{T1}{ptm}{m}{n}
\rput(0.40234375,0.2796875){$x_5$}
\usefont{T1}{ptm}{m}{n}
\rput(2.1023438,0.4196875){$x_6$}
\usefont{T1}{ptm}{m}{n}
\end{pspicture}}
\caption{The wheel graph $W_6$.}
\label{fig:wheel}
\end{figure}

Put $I:=(x_2x_4x_5,x_2x_3x_5,x_1x_3x_5,x_1x_3x_4,x_1x_2x_4)$ and $J:=(x_1x_2x_3x_4x_5)$. Then we can write $L=x_6I+J$. 
Note that $x_6\notin \mathrm{supp}(I) \cup \mathrm{supp}(J)$. Using {\it Macaulay2} \cite{GS}, we get 
$\mathfrak{m}=(x_1,x_2,x_3,x_4,x_5,x_6)\in \mathrm{Ass}(L^3, x_6^3)$ and 
$\mathfrak{m}\setminus\{x_6\}=(x_1,x_2,x_3,x_4,x_5)\in \mathrm{Ass}(R/(I+J)^3)$ . 
On the other hand, it follows from Proposition \ref{PRO.WHEEL.1} that $\mathfrak{m}=(x_1,x_2,x_3,x_4,x_5,x_6)\in \mathrm{Ass}(L^3)$. 
This shows that both statements of Proposition \ref{Pro.Maximal.Ass.2}   are valid.
}
\end{example}


\section{The appearance of the maximal ideal  in powers of nearly normally torsion-free monomial ideals}\label{Section4}

 In this section, our aim is to explore the appearance of the maximal ideal in powers of nearly normally torsion-free monomial ideals. To accomplish this, we first review the following definition.

\begin{definition} (\cite[Definition 2.1]{Claudia})
A monomial ideal $I$ in a polynomial  ring $R=K[x_1, \ldots, x_n]$ over a field $K$ is called {\it nearly normally torsion-free}
  if there exist a positive integer $k$ and a monomial prime ideal
 $\mathfrak{p}$ such that $\mathrm{Ass}_R(R/I^m)=\mathrm{Min}(I)$ for all $1\leq m\leq k$, and 
 $\mathrm{Ass}_R(R/I^m) \subseteq \mathrm{Min}(I) \cup \{\mathfrak{p}\}$ for all $m \geq k+1$.
\end{definition}


We now state and prove the main result of this section in the following theorem.

\begin{theorem}\label{NNTF}  
Let $I \subset R = K[x_i : x_i\in \mathrm{supp}(I)]$ be a nearly normally torsion-free monomial ideal, $\mathfrak{m}=(x_i : x_i\in \mathrm{supp}(I))$, and also   there exist  a monomial prime ideal $\mathfrak{p}\notin \mathrm{Min}(I)$ 
and an integer $m \ge 1$ such that  the following statements hold:
\begin{enumerate}
\item[(i)]  $\operatorname{Ass}(R/I^t) = \operatorname{Min}(I)$ for all $1 \le t \le m$;
\item[(ii)]   $\operatorname{Ass}(R/I^t) \subseteq \operatorname{Min}(I) \cup \{\mathfrak{p}\}$ for all $t \ge m+1$;
\item[(iii)]  For any  monomial prime ideal $\mathfrak{P} \subsetneq \mathfrak{m}$ with $\mathfrak{P}\notin \mathrm{Min}(I)$, $I_{\mathfrak{P}}$ is normally torsion-free, where 
\(I_{\mathfrak P}\) denotes the localization of  \(I\) at  \(\mathfrak P\). 
\end{enumerate}
 If there exists some $s\geq m+1$ such that  $\mathfrak{p}\in \mathrm{Ass}(R/I^s)$, then  $\mathfrak{p}=\mathfrak{m}$.  
    \end{theorem}

\begin{proof}
Suppose that  there exists some $s\geq m+1$ such that $\mathfrak{p}\in \mathrm{Ass}(R/I^s)$. 
 For simplicity of notation, assume  $\mathrm{supp}(I)=\{x_1, \ldots, x_n\}$, $\mathfrak{m}=(x_1, \ldots, x_n)$, and 
 $\mathfrak{p}=(x_1, \ldots, x_k)$. Hence, $\mathfrak{m}\setminus \mathfrak{p}=(x_{k+1}, \ldots, x_n)$. If $I$ is  $\mathfrak{q}$-primary, then we have $\mathrm{Min}(I)=\mathrm{Ass}(R/I^t)=\{\mathfrak{q}\}$ for all $t\geq 1$. This  contradicts the  hypothesis  $\mathfrak{p}\in \mathrm{Ass}(R/I^s)\setminus \mathrm{Min}(I)$. Thus, we assume that  $I$ is not primary, and since 
$\mathrm{Min}(I)=\mathrm{Ass}(R/I)$, we must have   $|\mathrm{Min}(I)|\geq 2$. 
 Suppose that  $\mathrm{Min}(I)=\{\mathfrak{q}_1, \ldots, \mathfrak{q}_\theta\}$, where $\theta\geq 2$. 
    If $\mathfrak{p}=\mathfrak{m}$, then there is nothing to prove. Therefore, we assume that  $\mathfrak{p}\subsetneq \mathfrak{m}$. 
 Due to  $\mathrm{Min}(I)=\mathrm{Ass}(R/I)$, one can deduce from Proposition \ref{Pro.supp} that 
 $$\bigcup_{j=1}^{\theta} \mathrm{supp}(\mathfrak{q}_j)=\mathrm{supp}(I)=\mathrm{supp}(\mathfrak{m})=\{x_1, \ldots, x_n\}.$$ 
 The assumption  $\mathfrak{p}\subsetneq \mathfrak{m}$ permits us, without loss of generality, to assume   $\mathfrak{q}_j\subset \mathfrak{p}$ for all $j=1, \ldots, \lambda$, where $\lambda\geq 1$,   and $\mathfrak{q}_j\not\subset\mathfrak{p}$ for all $j=\lambda+1, \ldots, \theta$.  
 Now, consider  $$\rho:=\prod_{x_{\alpha}\in \mathfrak{m}\setminus \mathfrak{p}}x_{\alpha}=x_{k+1}  \cdots x_n.$$ 
 It follows from   $\mathfrak{p}\subsetneq \mathfrak{m}$    that $\rho \neq 1$.    Localize at $\rho$. 
    Then, one can  deduce from   \cite[Lemma 9.38]{sharp} that 
   $$\mathrm{Ass}_{R_\rho}(R_\rho/I^sR_\rho)=\{\mathfrak{q}R_\rho : \mathfrak{q} \in \mathrm{Ass}(R/I^s) \; \text{and} \; \mathfrak{q} \cap \Gamma
   =\emptyset\},$$ 
   where $\Gamma=\{\rho^n : n \geq 0\}$ is the  multiplicatively closed subset of $R$. It is straightforward to check that 
    $$\mathrm{Min}(IR_{\rho}) =\mathrm{Ass}_{R_\rho}(R_\rho/IR_\rho)=\{\mathfrak{q}_1R_\rho, \ldots, \mathfrak{q}_\lambda R_\rho\}$$ and 
   $\mathfrak{p}R_\rho\in \mathrm{Ass}_{R_\rho}(R_\rho/I^sR_\rho)$.     Note that $R_{\mathfrak{p}}$ is a further localization of $R_{\rho}$, specifically, due to  \cite[Exercise 5.45]{sharp}, we have the following  ring isomorphism $$R_{\mathfrak{p}} \cong (R_{\rho})_{\mathfrak{p}R_{\rho}}.$$ 
    As  $\mathfrak{p}R_{\rho} \in \mathrm{Ass}_{R_{\rho}}(R_{\rho}/I^s R_{\rho})$ and $\mathfrak{p}R_{\rho} \cap (R_{\rho} \setminus \mathfrak{p}R_{\rho}) = \emptyset$,  it follows from the properties of localization that $\mathfrak{p}R_{\mathfrak{p}} \in \mathrm{Ass}_{R_{\mathfrak{p}}}(R_{\mathfrak{p}}/I^s R_{\mathfrak{p}})$.
  
  On account of  $\mathfrak{p} \subsetneq \mathfrak{m}$ and  $\mathfrak{p}\notin \mathrm{Min}(I)$, condition (iii) implies that the localization $I_{\mathfrak{p}}$ is
   normally torsion-free. By the definition of normally torsion-free ideals and given that $\mathrm{Ass}(R/I) = \mathrm{Min}(I)$, we can conclude that 
\[ \mathrm{Ass}_{R_{\mathfrak{p}}}(R_{\mathfrak{p}}/I^s R_{\mathfrak{p}}) \subseteq \mathrm{Ass}_{R_{\mathfrak{p}}}(R_{\mathfrak{p}}/I R_{\mathfrak{p}}) = \mathrm{Min}(I_{\mathfrak{p}}). \]
In light of $\mathfrak{p}R_{\mathfrak{p}} \in \mathrm{Ass}_{R_{\mathfrak{p}}}(R_{\mathfrak{p}}/I^s R_{\mathfrak{p}})$, 
 we must have $\mathfrak{p}R_{\mathfrak{p}} \in \mathrm{Min}(I_{\mathfrak{p}})$, which implies that $\mathfrak{p} \in \mathrm{Min}(I)$. However, this directly contradicts the initial hypothesis that $\mathfrak{p} \notin \mathrm{Min}(I)$. Thus, our assumption that $\mathfrak{p} \subsetneq \mathfrak{m}$
  is false. Consequently, we obtain  $\mathfrak{p} = \mathfrak{m}$, and the proof is complete.
\end{proof}


It should be noted that condition (iii) in Theorem  \ref{NNTF} is essential and cannot be omitted. The following examples demonstrate  the necessity of this condition.  To this end, we first recall the following definition  from \cite{NQKR}. 

\bigskip
 Let $R=K[x_1, \ldots, x_n]$ be a polynomial ring over a field $K$. Let $t$ be a positive integer. A monomial $x_{i_1} x_{i_2} \cdots x_{i_d} \in R$ with $i_1 \leq i_2 \leq \cdots  \leq i_d$ is called {\it $t$-spread} if $i_j -i_{j-1} \geq t$ for all $j=2, \ldots, d$.  A monomial ideal in $R$ is called a {\it $t$-spread monomial ideal} if it is generated by $t$-spread monomials.   

Let $I\subset R$ be a $t$-spread monomial ideal. Then $I$ is called a {\it $t$-spread strongly stable ideal} if for all $t$-spread monomials $u\in \mathcal{G}(I)$, all $j\in \mathrm{supp}(u)$
 and all $1\leq i <j$ such that $x_i(u/x_j)$  is a $t$-spread monomial, it follows that $x_i(u/x_j)\in I$. 
 
 \begin{definition}(\cite[Definition 5.1]{NQKR})
 A monomial ideal $I\subset R$ is called a {\it $t$-spread principal Borel} if there exists a $t$-spread monomial $u\in \mathcal{G}(I)$ such that  $I$  is the smallest $t$-spread strongly stable ideal which contains $u$. We denote it as $I=B_t(u)$. It should be noted that for a $t$-spread monomial $u=x_{i_1} x_{i_2} \cdots x_{i_d} \in R$, we have $x_{j_1} x_{j_2} \cdots x_{j_d}\in \mathcal{G}(B_t(u))$ if and only if 
 $j_1\leq i_1, \ldots, j_d \leq i_d$ and $j_k - j_{k-1} \geq t$ for $k\in \{2, \ldots, d\}$.  
 \end{definition}


\begin{example}
\em{
Let $t=3$, $a=1,$ $b=7$, and $n=10$. Then  $u=x_1x_7x_{10}$ is  a $3$-spread monomial and 
$I=B_t(u)\subset R=K[x_1, x_4, x_5,x_6, x_7, x_8,x_9, x_{10}]$.  Indeed, direct computation gives that 
 \begin{align*}
 I=(&x_1x_4x_7,\; x_1x_4x_8,\; x_1x_4x_9,\; x_1x_4x_{10},\; x_1x_5x_8,\; x_1x_5x_9,\; x_1x_5x_{10},\\
 & x_1x_6x_9,\; x_1x_6x_{10}, \; x_1x_7x_{10}).
 \end{align*}
 According to Lemma \ref{Lemma 5.15}(ii), we can deduce that $I$  is nearly normally torsion-free. 
 On the other hand, by virtue of \cite[Corollary 2.6]{AEL}, we know that $I=B_t(u)$  satisfies the persistence property.  
  Using Macaulay2 \cite{GS}, we obtain 
\begin{align*}
\mathrm{Ass}(R/I) = \{& (x_1),\ (x_4,x_5,x_6,x_7),\ (x_4,x_5,x_6,x_{10}), (x_4,x_5,x_9,x_{10}),\\ 
&(x_4,x_8,x_9,x_{10}),\ (x_7,x_8,x_9,x_{10})\},
\end{align*}
and, for all $k\geq 2$, 
\begin{align*}
\mathrm{Ass}(R/I^k) = \{& (x_1),\ (x_4,x_5,x_6,x_7),\ (x_4,x_5,x_6,x_{10}), (x_4,x_5,x_9,x_{10}),\\
& (x_4,x_8,x_9,x_{10}),\ (x_7,x_8,x_9,x_{10}), (x_4,x_5,x_6,x_7,x_8,x_9,x_{10})\}.
\end{align*}
The computation above shows that, for all $k\geq 2$, we always have 
 $$\mathfrak{m}=(x_1, x_4, x_5,x_6, x_7, x_8,x_9, x_{10})\notin \mathrm{Ass}(R/I^k).$$
 In addition, due to  $(x_4, x_5,x_6, x_7, x_8,x_9, x_{10})\in \mathrm{Ass}(R/I^2)$ and  $I$  is nearly normally torsion-free, we must have 
 $\mathfrak{p}=(x_4, x_5,x_6, x_7, x_8,x_9, x_{10})$.
 
 Now, consider $\mathfrak{P}=(x_4, x_5,x_6, x_7, x_8,x_9, x_{10})\subsetneq \mathfrak{m}$. It is easy to check  
  $$I_{\mathfrak{P}}=(x_4x_7,\; x_4x_8,\; x_4x_9,\; x_4x_{10},\; x_5x_8,\; x_5x_9,\; x_5x_{10},\;
 x_6x_9,\; x_6x_{10}, \; x_7x_{10}).$$
  Using Macaulay2 \cite{GS}, we see that  $|\mathrm{Ass}(I_{\mathfrak{P}})|=5$ while $|\mathrm{Ass}(I_{\mathfrak{P}}^2)|=6$. This yields that 
 $I_{\mathfrak{P}}$ is not normally torsion-free.   
This verifies that  condition (iii)  in Theorem \ref{NNTF} is necessary  and cannot be dropped.  
}
\end{example}


 Before presenting the next example,  recall that a graph $G$ is called  \textit{almost bipartite} if  $G$ has only one induced odd cycle subgraph. 
 
 \begin{example}
\em{
Let $R=K[x_1, \ldots, x_8]$ be the polynomial ring and consider the following monomial ideal
\begin{align*}
I = (&x_2x_4x_5x_7,\,x_2x_3x_5x_7,\,x_1x_3x_5x_7,\,x_2x_4x_5x_6x_8, x_1x_3x_4x_6x_8,\\
& x_1x_2x_4x_6x_8,\,x_1x_3x_4x_6x_7,\,x_1x_2x_4x_6x_7). 
\end{align*}
 Using Macaulay2 \cite{GS}, we get the following 
\begin{align*}
\operatorname{Ass}(R/I)=\{&
(x_2,x_1),\,(x_5,x_1),\,(x_3,x_2),\,(x_4,x_3),\,(x_5,x_4),\\
&(x_7,x_4),\,(x_6,x_5),\,(x_7,x_6),\,(x_8,x_7)\},
\end{align*}
while
\begin{align*}
\operatorname{Ass}(R/I^2)=\{&(x_2,x_1),\,(x_5,x_1),\,(x_3,x_2),\,(x_4,x_3),\,(x_5,x_4),\,(x_7,x_4),\\
&(x_6,x_5),\,(x_7,x_6),\,(x_8,x_7),\,(x_5,x_4,x_3,x_2,x_1)\}.
\end{align*}
Here, consider the graph $H$ shown in Figure~2. It is straightforward to verify that the square-free monomial ideal $I$ is 
precisely the cover ideal of the graph $H$. Furthermore, we see that $H$ is an almost bipartite graph. Hence, 
one can immediately deduce from Theorem  \ref{Theorem 4.5}  that $I$ is nearly normally torsion-free.
 Since $(x_1,x_2,x_3,x_4,x_5)\in \operatorname{Ass}(R/I^2)$ and $I$ is nearly normally torsion-free, we must have 
 $\mathfrak{p}=(x_1,x_2,x_3,x_4,x_5)$. In particular, we see that $\mathfrak{p}\neq \mathfrak{m}$. 
 
 Now, take $\mathfrak{P}=(x_1,x_2,x_3,x_4,x_5)\subsetneq \mathfrak{m}$. It is not hard to investigate that 
$$I_{\mathfrak{P}}= (x_2x_4x_5,\,x_2x_3x_5,\,x_1x_3x_5,\,x_1x_3x_4,\,x_1x_2x_4).$$
  Using Macaulay2 \cite{GS}, we obtain  $|\mathrm{Ass}(I_{\mathfrak{P}})|=5$ while $|\mathrm{Ass}(I_{\mathfrak{P}}^2)|=6$. This means 
 that  $I_{\mathfrak{P}}$ is not normally torsion-free.   
Hence, condition (iii) in Theorem~\ref{NNTF} cannot be removed.

}
 \end{example}

\begin{figure}[h]
\centering
\scalebox{1.2} 
{
\begin{pspicture}(0,-1.208125)(4.2228127,1.208125)
\psdots[dotsize=0.12](1.6609375,0.6896875)
\psdots[dotsize=0.12](2.4409375,0.1096875)
\psdots[dotsize=0.12](0.8209375,0.1096875)
\psdots[dotsize=0.12](1.2409375,-0.6903125)
\psdots[dotsize=0.12](2.0609374,-0.6903125)
\psdots[dotsize=0.12](3.2209375,0.1096875)
\psdots[dotsize=0.12](2.8409376,-0.6903125)
\psline[linewidth=0.04cm](0.8009375,0.1096875)(1.6409374,0.6896875)
\psline[linewidth=0.04cm](0.8209375,0.1096875)(1.2209375,-0.6503125)
\psline[linewidth=0.04cm](1.2209375,-0.6703125)(1.2809376,-0.6703125)
\psline[linewidth=0.04cm](1.2409375,-0.6703125)(1.2409375,-0.6903125)
\psline[linewidth=0.04cm](1.2609375,-0.6703125)(1.2209375,-0.6503125)
\psline[linewidth=0.04cm](1.2409375,-0.6903125)(2.0609374,-0.6903125)
\psline[linewidth=0.04cm](2.0409374,-0.6903125)(2.4409375,0.1296875)
\psline[linewidth=0.04cm](1.6609375,0.7096875)(2.4409375,0.1096875)
\psline[linewidth=0.04cm](2.4409375,0.1096875)(3.2009375,0.1096875)
\psline[linewidth=0.04cm](2.0409374,-0.6903125)(2.0409374,-0.6703125)
\psline[linewidth=0.04cm](2.0409374,-0.6703125)(2.0409374,-0.6503125)
\psline[linewidth=0.04cm](2.0609374,-0.6903125)(2.8609376,-0.6903125)
\psline[linewidth=0.04cm](2.8409376,-0.6703125)(3.2209375,0.1296875)
\psline[linewidth=0.04cm](2.8609376,-0.6703125)(2.8609376,-0.6503125)
\psline[linewidth=0.04cm](2.8209374,-0.6903125)(3.6209376,-0.6903125)
\psdots[dotsize=0.12](3.6209376,-0.6903125)
\usefont{T1}{ptm}{m}{n}
\rput(1.6823437,1.0196875){$x_1$}
\usefont{T1}{ptm}{m}{n}
\rput(0.40234375,0.1396875){$x_2$}
\usefont{T1}{ptm}{m}{n}
\rput(1.1423438,-0.9803125){$x_3$}
\usefont{T1}{ptm}{m}{n}
\rput(2.1023438,-0.9603125){$x_4$}
\usefont{T1}{ptm}{m}{n}
\rput(2.8823438,-0.9403125){$x_7$}
\usefont{T1}{ptm}{m}{n}
\rput(2.6423438,0.4396875){$x_5$}
\usefont{T1}{ptm}{m}{n}
\rput(3.6823437,0.1396875){$x_6$}
\usefont{T1}{ptm}{m}{n}
\rput(3.7423437,-0.9403125){$x_8$}
\end{pspicture} 
}
\caption{The  graph $H$.}
\end{figure}

 
 We conclude this section with the following example, which shows    how Theorem \ref{NNTF}  can be used.
 
 \begin{example} \label{APP.2}
 \em{ 
Suppose the following monomial ideal $I$ in $R=K[x_1, \ldots, x_5]$ 
\[
I=(x_1x_4,\;x_1x_3^2,\;x_3^2x_5^2,\;x_2^3x_4,\;x_2^3x_5^2). 
\]
By computations in  Macaulay2 \cite{GS},  we obtain the following minimal primary decomposition
$$I=(x_1,x_2^3,x_3^2)\cap (x_2^3,x_3^2,x_4)\cap (x_3^2,x_4,x_5^2)  \cap (x_4,x_5^2,x_1)\cap (x_5^2,x_1,x_2^3).$$
Hence, we can deduce that 
$$\mathrm{Ass}(R/I)=\{(x_1,x_2,x_3),(x_1,x_2,x_5),(x_1,x_4,x_5),(x_2,x_3,x_4),(x_3,x_4,x_5)\}.$$
In the sequel, we  show that $I$ is nearly normally torsion-free. To do this, we rely on Lemma \ref{Lemma 3.2}. Direct computations imply the following 
statements:
\begin{itemize}
\item[$\bullet$] $I(\mathfrak{m}\setminus \{x_1\})=(x_2^3x_5^2, x_4, x_3^2).$
\item[$\bullet$] $I(\mathfrak{m}\setminus \{x_2\})=(x_1x_3^2, x_4, x_5^2).$
\item[$\bullet$] $I(\mathfrak{m}\setminus \{x_3\})=(x_2^3x_4, x_1, x_5^2).$
\item[$\bullet$] $I(\mathfrak{m}\setminus \{x_4\})=(x_3^2x_5^2, x_1, x_2^3).$
\item[$\bullet$] $I(\mathfrak{m}\setminus \{x_5\})=(x_1x_4, x_2^3, x_3^2).$
\end{itemize}
It follows immediately from Theorem \ref{NTF.Th.2.5} that $I(\mathfrak{m}\setminus \{x_i\})$, for all $1\leq i \leq 5$, is normally torsion-free. 
Hence, Lemma \ref{Lemma 3.2} yields that $I$ is nearly normally torsion-free.  This means that there exist  a monomial prime ideal $\mathfrak{p}\notin \mathrm{Min}(I)$ and an integer $m \ge 1$ such that $\operatorname{Ass}(R/I^t) = \operatorname{Min}(I)$ for all $1 \le t \le m$, and 
  $\operatorname{Ass}(R/I^t) \subseteq \operatorname{Min}(I) \cup \{\mathfrak{p}\}$ for all $t \ge m+1$.
  
One can easily check
 $\mathrm{Ass}(R/I)=\mathrm{Min}(I)$.  Let $\mathfrak{P} \subsetneq \mathfrak{m}=(x_1,x_2,x_3,x_4,x_5)$ be a  monomial prime ideal with $\mathfrak{P}\notin \mathrm{Min}(I)$.  From now on, our aim is to show that the following monomial ideals are normally torsion-free:    
\[
\begin{array}{cccc}
(1)\; I_{(x_1,x_2,x_3,x_4)} &
(2)\; I_{(x_1,x_2,x_3,x_5)} &
(3)\; I_{(x_1,x_3,x_4,x_5)} &
(4)\; I_{(x_2,x_3,x_4,x_5)} \\[2mm]

(5)\; I_{(x_1,x_2,x_4,x_5)} &
(6)\; I_{(x_1,x_2,x_4)} &
(7)\; I_{(x_1,x_3,x_4)} &
(8)\; I_{(x_1,x_3,x_5)} \\[2mm]

(9)\; I_{(x_2,x_3,x_5)} &
(10)\; I_{(x_2,x_4,x_5)} &
&
\end{array}
\]
 
\medskip
According to the discussion above, all of the monomial ideals (1)–(5) are normally torsion-free. 
Moreover, notice that 
\[I_{(x_1,x_2,x_4)}=I_{(x_1,x_3,x_4)}=I_{(x_1,x_3,x_5)}=I_{(x_2,x_3,x_5)}=I_{(x_2,x_4,x_5)}=R,\]
and so  all of the monomial ideals (6)–(10) are trivially normally torsion-free as well.  Hence,  it follows from Theorem \ref{NNTF} 
 that if there exists  $s\geq m+1$ such that $\mathfrak{p}\in \mathrm{Ass}(R/I^s)$, then we must 
 have $\mathfrak{p}=\mathfrak{m}=(x_1, x_2, x_3, x_4, x_5)$.  
 However, using  Macaulay2 \cite{GS}, we get  $\mathrm{Min}(I)=\mathrm{Ass}(R/I)=\mathrm{Ass}(R/I^2)$ and 
 $\mathrm{Ass}(R/I^3)=\mathrm{Min}(I)\cup \{(x_1, x_2, x_3, x_4, x_5)\}$, as expected. 
  }
 \end{example}
 

\bigskip
\centerline{\bf  The conflict of interest and data availability statement}

\bigskip
We hereby declare that this manuscript has no associated  data and that there is no conflict of interest regarding its content. 
 

\bigskip
\begin{center}
{\bf  ORCID}
  \vspace{3mm}
  \hspace{2cm}  \item[\textbf{ Mehrdad Nasernejad}]: ORCID: https://orcid.org/0000-0003-1073-1934
\hspace{2cm}   \item[\textbf{Jonathan Toledo}]: ORCID: https://orcid.org/0000-0003-3274-1367
 \end{center}


\bigskip
\noindent{\bf Acknowledgments.}
First, the authors  are  deeply grateful to the two anonymous referees for a  careful reading of the manuscript  and for  their   valuable suggestions which led to 
several  improvements in the quality of this paper.  In particular, the second author, Jonathan Toledo, is partially supported by SECIHTI-CBF-2025-I-1583.


\end{document}